\documentclass[a4paper,10pt]{amsart}
\usepackage{hyperref,fancyhdr,mathrsfs,amsmath,amscd,amsthm,amsfonts,latexsym,amssymb,stmaryrd,verbatim}
\usepackage[all]{xy}

\voffset 3mm
\topmargin 10mm
\evensidemargin  5mm
\oddsidemargin  5mm
\textwidth  145mm
\textheight 205mm
\headsep 5mm
\marginparsep 2mm
\marginparwidth 20mm
\footskip 0mm
\headheight 5mm

\linespread{1.1}

\pagestyle{fancy}
\lhead[{\footnotesize \thepage}]{\footnotesize HARMONIC MORPHISMS AND MOMENT MAPS}
\chead[]{}
\rhead[\footnotesize M.~BENYOUNES, E.~LOUBEAU, R.~PANTILIE]{\footnotesize \thepage}
\lfoot{}
\cfoot{}
\rfoot{}

\DeclareMathOperator{\trace}{trace}
\DeclareMathOperator{\grad}{grad}

\DeclareMathOperator{\dif}{d}

\newcommand{\V}{\mathscr{V}}

\newcommand{\ol}{\mathcal{O}}

\def \o{\omega}

\def \phi{\varphi}
\def \Phi{\varPhi}
\def \p{\pi}

\def \R{\mathbb{R}}
\def \Hq{\mathbb{H}\,}
\def \C{\mathbb{C}\,}

\def\widecheckg{g^{\hspace*{-2.5pt}\vbox to 5pt{\hbox to
0pt{\LARGE$\check{}$}}}\hspace*{2pt}}

\def\widecheckl{\lambda^{\hspace*{-3.5pt}\vbox to 8pt{\hbox to
0pt{\LARGE$\check{}$}}}\hspace*{2pt}}

\hyphenation{pro-duct}

\newcommand{\sn}{{\mathbb S}}
\newcommand{\rn}{{\mathbb R}}

\newcommand{\cn}{{\mathbb C}}
\renewcommand{\i}{{\rm i}}
\newcommand{\cp}{{\mathbb C}P^n}
\newcommand{\tcp}{T{\mathbb C}P^n}
\newcommand{\SU}{\mathrm{SU}(n+1)}
\newcommand{\U}{\mathrm{U}}
\newcommand{\HM}{\mathrm{HM}(n+1)}
\newcommand{\su}{\mathfrak{su}(n+1)}

\DeclareMathOperator{\proj}{proj}

\begin{document}

\title{Harmonic morphisms and moment maps\\ 
on hyper-K\"ahler manifolds} 
\author{M.~Benyounes, E.~Loubeau, R.~Pantilie} 
\address{M.~Benyounes, D\'epartement de Math\'ematiques, Universit\'e de Bretagne Occidentale, 6,
avenue Victor Le Gorgeu, CS 93837, 29238 Brest Cedex 3, France} 
\email{\href{mailto:Michele.Benyounes@univ-brest.fr}{Michele.Benyounes@univ-brest.fr}}
\address{E.~Loubeau, D\'epartement de Math\'ematiques, Universit\'e de Bretagne Occidentale, 6,
avenue Victor Le Gorgeu, CS 93837, 29238 Brest Cedex 3, France} 
\email{\href{mailto:Eric.Loubeau@univ-brest.fr}{Eric.Loubeau@univ-brest.fr}} 
\address{R.~Pantilie, Institutul de Matematic\u a ``Simion~Stoilow'' al Academiei Rom\^ane,
C.P. 1-764, 014700, Bucure\c sti, Rom\^ania} 
\email{\href{mailto:Radu.Pantilie@imar.ro}{Radu.Pantilie@imar.ro}}
\subjclass[2010]{53C43, 53C26}
\keywords{harmonic morphism, moment map, hyper-K\"ahler manifold} 

\newtheorem{thm}{Theorem}[section]
\newtheorem{lem}[thm]{Lemma}
\newtheorem{cor}[thm]{Corollary}
\newtheorem{prop}[thm]{Proposition}

\theoremstyle{definition}

\newtheorem{defn}[thm]{Definition}
\newtheorem{rem}[thm]{Remark}
\newtheorem{exm}[thm]{Example}

\numberwithin{equation}{section}

\begin{abstract} 
We characterise the actions, by holomorphic isometries on a K\"ahler manifold with zero first Betti number, of an abelian Lie group of $\dim\geq2$\,, 
for which the moment map is horizontally weakly conformal (with respect to some Euclidean structure on the Lie algebra of the group).\\ 
\indent 
Furthermore, we study the hyper-K\"ahler moment map $\phi$ induced by an abelian Lie group $T$ acting by triholomorphic isometries on a hyper-K\"ahler manifold $M$,  
with zero first Betti number, thus obtaining the following:\\ 
\indent 
$\bullet$ If $\dim T=1$ then $\phi$ is a harmonic morphism. Moreover, we illustrate this on the tangent bundle of the complex projective space equipped with the 
Calabi hyper-K\"ahler structure, and we obtain an explicit global formula for the map.\\ 
\indent 
$\bullet$ If $\dim T\geq2$ and either $\phi$ has critical points, or $M$ is nonflat and $\dim M=4\dim T$ then $\phi$ cannot be horizontally weakly conformal. 
\end{abstract} 
\maketitle
\thispagestyle{empty}
\vspace{-10mm}

\section{Introduction} 

This article is a contribution towards establishing the following statement: \emph{any canonical submersion/projection 
between Riemannian manifolds is, in a natural way, a (generalized) harmonic morphism}; that is, it preserves, through the pull back, the sheaves of harmonic functions (or natural 
subsheaves of it). 

Devised to produce new harmonic functions on Riemannian manifolds, harmonic morphisms quickly revealed a very strong geometric nature as horizontally weakly conformal harmonic maps, i.e.\ at the same time a critical point of the energy functional and, away from a critical set, a submersion acting conformally on the horizontal distribution 
(see \cite{BaiWoo2}\,). 

These combined conditions give an over-determined system, which needs not admit solutions and, indeed, no general existence result has yet emerged. There exist, nonetheless, classification theorems in some instances such as maps from three-dimensional space forms (see \cite{BaiWoo2}\,) or with one-dimensional fibres on Einstein manifolds 
\cite{Pan-4to3}\,,\,\cite{PanWoo-d}\,.

A common thread in all these examples is the isometric action of a Lie group on the domain and such favourable conditions can be encountered on a wider scale~\cite{isom}\,.

This theme can be taken to an even higher level on K\"ahler manifolds, since the symplectic structure allows the construction of a moment map which, under conditions, gives a new type of harmonic morphisms.

While, strictly speaking, this scheme fails to produce interesting examples in K\"ahler geometry, essentially for dimensional reasons (see, also, Theorem \ref{thm:hcmom_Kahler}\,, 
and Corollary \ref{cor:2.5}\,, below), its extension to hyper-K\"ahler manifolds turns out to be more successful. The existence of a trio of K\"ahlerian structures and, potentially, of triholomorphic isometric actions, allows for moment maps which are harmonic morphisms in the three-dimensional Euclidean space (Corollary \ref{cor:hm_to_R3}\,). 
Basically, this constructs three harmonic functions with (mutually) orthogonal gradients. 

For higher dimensional abelian Lie group actions, the corresponding hyper-K\"ahler moment maps are still harmonic 
and twistorial (Propositions \ref{prop:hK_har_mom} and \ref{prop:twist_hK_mom}\,, respectively; 
see \cite{LouPan-II} for more information on twistorial maps). However, at least in the presence of critical points, 
or assuming nonflatness and a natural dimensional condition, these cannot be horizontally weakly conformal 
(Corollary \ref{cor:hK_fixed_points_hmom_n=1}\,, Theorem \ref{thm:3.7}\,, 
respectively; see, also, Example \ref{exm:2.2}\,), with respect to the metric induced by some Euclidean structure on the corresponding Lie algebra. 
Nevertheless (see the proof of Proposition \ref{prop:twist_hK_mom}\,), the twistoriality is equivalent to the fact that the hyper-K\"ahler moment map, 
of any abelian Lie group $T$ acting by triholomorphic isometries, pulls back, into the sheaf of harmonic functions, a natural subsheaf of the sheaf of harmonic functions on 
$(\mathfrak{t}\otimes{\rm Im}\Hq)^*$\,, where $\mathfrak{t}$ is the Lie algebra of $T$. 

In dimension four, a harmonic morphism is a (local) hyper-K\"ahler moment map if and only if it is given by the Gibbons--Hawking construction. 
This leads to natural classes of hyper-K\"ahler moment maps, for all possible dimensions (Example \ref{exm:Gibbons-Hawking}\,). 

We, also, implement this approach on the tangent bundle of the complex projective space equipped with Calabi's hyper-K\"ahler metric, 
with explicit descriptions of spaces and maps. Working with a natural complex homogeneous model for $\cp$, we give detailed formulas for the constituents of the hyper-K\"ahler structure (see Section \ref{section:Calabi}\,) and show how Killing fields on $\cp$ give rise to Killing fields on $\tcp$, with tri-holomorphic isometric actions. Integrating the three Hamiltonian functions provides an explicit globally defined harmonic morphism from $\tcp$ to $\rn^3$, parametrized by $\su$ (Theorem \ref{thm:hmom_Calabi}\,). 
Dimension one must stand out, as for $n=1$ we obtain a harmonic morphism with one-dimensional fibres and this map must fall into one of the previously 
described classifications: it is produced, as a projection, by the foliation defined by the distinguished Killing field on $T\C\!P^1$, in a manner reminiscent of the Hopf map.

\section{Horizontally conformal moment maps on K\"ahler manifolds} 

\indent 
All the manifolds are assumed connected. Also, for simplicity, we restrict ourselves to abelian Lie group actions, 
although most of our results hold in the more general setting of generalized foliations 
generated by a finite family of commuting (tri)holomorphic isometries.\\  
\indent 
An orbit of a group action is called \emph{nontrivial} if it is not a fixed point. All the actions are assumed nontrivial 
(that is, there exists at least one nontrivial orbit). 

\begin{prop} \label{prop:hcmom_Kahler} 
Let $T$ be an abelian Lie group acting by holomorphic isometries on a K\"ahler manifold $(M,g,J)$ with zero first Betti number. 
Denote by $\phi:M\to\mathfrak{t}^*$ the moment map, where $\mathfrak{t}$ is the Lie algebra of $T$.\\ 
\indent 
The following assertions are equivalent:\\ 
\indent 
\quad{\rm (i)} The moment map of $T$ is horizontally weakly conformal, with respect to some Euclidean metric on $\mathfrak{t}$\,;\\ 
\indent 
\quad{\rm (ii)} There exists an invariant metric on $T$ such that the induced maps from $T$ onto the nontrivial orbits are homothetic covering maps;\\ 
\indent 
\quad{\rm (iii)}  The nontrivial orbits of $T$ are umbilical and have the same dimension as $T$. 
\end{prop}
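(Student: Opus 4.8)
The plan is to translate each of the three assertions into a statement about the $\mathfrak{t}$-valued ``orbit metric'' and then to connect them through a handful of identities for the fundamental Killing fields. Fix a basis $\xi_1,\dots,\xi_k$ of $\mathfrak{t}$ and write $X_a$ for the Killing field generated by $\xi_a$. Since $b_1(M)=0$ the moment map exists, and with the convention $\omega(\cdot,\cdot)=g(J\cdot,\cdot)$ its components satisfy $\grad\phi_a=JX_a$. Because $T$ is abelian, $\omega(X_a,X_b)=0$, so $\phi$ is $T$-invariant, the orbit directions $\mathscr{V}=\spa\{X_a\}$ are vertical for $\phi$, and the horizontal distribution is exactly $J\mathscr{V}$. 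The central object is the symmetric form $f_{ab}=g(X_a,X_b)$ along the orbit; as $g(JX_a,JX_b)=g(X_a,X_b)$, the map $\phi$ is horizontally weakly conformal with respect to a Euclidean structure $\langle\cdot,\cdot\rangle$ on $\mathfrak{t}$ precisely when $f_{ab}=\lambda^2\langle\xi_a,\xi_b\rangle$ for some function $\lambda$; equivalently, the conformal class of $(f_{ab})$ equals that of $\langle\cdot,\cdot\rangle$ at every point.

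First I would record three consequences of the $X_a$ being commuting, holomorphic Killing fields. (a) Combining $\nabla_{X_a}X_b=\nabla_{X_b}X_a$ with the Killing skew-symmetry $g(\nabla_YX_a,Z)=-g(\nabla_ZX_a,Y)$ gives, by the standard three-index cyclic manipulation, $g(\nabla_{X_a}X_b,X_c)=0$; hence $\nabla_{X_a}X_b$ is normal to the orbit, the second fundamental form is $\mathrm{II}(X_a,X_b)=\nabla_{X_a}X_b$, and $X_c(f_{ab})=0$, so $(f_{ab})$ is constant along orbits. (b) Setting $H_{abc}=g(\nabla_{X_a}X_b,JX_c)=-\hess\phi_b(X_a,X_c)$, symmetry of the Hessian gives symmetry in $a\leftrightarrow c$ while commutativity gives symmetry in $a\leftrightarrow b$; since these two transpositions generate $S_3$, the array $H_{abc}$ is totally symmetric. (c) For any $V$ normal to the orbit the Killing equations yield $V(f_{ab})=-2\,g(\mathrm{II}(X_a,X_b),V)$, which for $V=JX_c$ reads $JX_c(f_{ab})=-2H_{abc}$.

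The equivalence (i)$\Leftrightarrow$(ii) is then bookkeeping: by (a) the pulled-back metric on $T$ is translation invariant with value $(f_{ab})$ at the identity, so the orbit map is homothetic exactly when $(f_{ab})$ is proportional to a fixed $\langle\cdot,\cdot\rangle$ and is a covering exactly when $(f_{ab})$ is nondegenerate, i.e.\ the orbit has dimension $\dim T$; moreover on a nontrivial orbit (i) forces $(f_{ab})$ nondegenerate, since a degenerate identity $f_{ab}=\lambda^2\langle\xi_a,\xi_b\rangle$ makes $\lambda=0$ and collapses the orbit to a fixed point. For (iii)$\Rightarrow$(i), assume the orbits umbilical of full dimension, so $\mathrm{II}(X_a,X_b)=f_{ab}H$; then (c) gives $V(f_{ab})=-2f_{ab}\,g(H,V)$ for every normal $V$, so $V(\log f_{ab})$ is independent of $a,b$ and the conformal class of $(f_{ab})$ is constant in all normal directions, while by (a) it is constant along the orbits. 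As $M$ is connected this class is globally constant, which is (i). For (i)$\Rightarrow$(iii), insert $f_{ab}=\lambda^2\delta_{ab}$ (orthonormal basis) into (c): from $JX_c(f_{ab})=-2H_{abc}$ and total symmetry of $H$ one reads off, for $k\geq2$, that $JX_c(\lambda^2)=0$ and $H_{abc}=0$, so $\mathrm{II}(X_a,X_b)\perp J\mathscr{V}$; the remaining normal components, again from (c), are proportional to $\delta_{ab}$ with a common factor, whence $\mathrm{II}(X_a,X_b)=f_{ab}H$. The case $k=1$ is trivial, a one-dimensional orbit being automatically umbilical and a map into a one-dimensional target automatically horizontally conformal.

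The delicate point, and the main obstacle, is the behaviour in the $J\mathscr{V}$-directions encoded by $H_{abc}$. Umbilicity a priori permits the mean curvature $H$ to have components in every normal direction, whereas $H_{abc}$ is confined to the special subspace $J\mathscr{V}$, and it is the total symmetry from (b) that reconciles the two: the relation $H_{abc}=f_{ab}\,g(H,JX_c)$ imposed by umbilicity, read against total symmetry, forces $f_{ab}h_c=f_{ac}h_b$ with $h_c=g(H,JX_c)$, so every row of $(f_{ab})$ is proportional to $h$; for a nondegenerate $(f_{ab})$ with $k\geq2$ this is impossible unless $h=0$, i.e.\ $H_{abc}=0$. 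Getting the K\"ahler and Killing sign conventions right, so that the Hessian symmetry in (b) and the coefficient $-2$ in (c) come out correctly, is the other place requiring care; granting these, the equivalences reduce to the linear algebra above together with the connectedness of $M$.
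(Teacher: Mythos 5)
Your proof is correct, and it splits naturally into a part that coincides with the paper's and a part that takes a genuinely different route. The equivalence (i)$\Leftrightarrow$(ii) is exactly the paper's argument: both reduce horizontal weak conformality, via $\grad\phi_a=JX_a$ and the characterization in \cite[Lemma 2.4.4]{BaiWoo2}, to the condition $g(X_a,X_b)=\Lambda\,h(\xi_a,\xi_b)$ with $\Lambda$ constant along orbits. For the umbilicity clause, however, the paper proves (ii)$\Leftrightarrow$(iii) by invoking a foliation-theoretic fact --- a foliation is umbilical iff the local flows of basic vector fields restrict to conformal diffeomorphisms of the leaves --- combined with the invariance of the induced orbit metrics, which upgrades ``conformal'' to ``homothetic''. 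You instead prove (i)$\Leftrightarrow$(iii) directly from the identities $\nabla_{X_a}X_b\perp\mathscr{V}$ and $V(f_{ab})=-2\,g\bigl(\mathrm{II}(X_a,X_b),V\bigr)$, which makes the argument elementary and self-contained. Note that for (i)$\Rightarrow$(iii) your identity (c) restricted to normal $V$ already gives $\mathrm{II}(X_a,X_b)=-\tfrac12\,\delta_{ab}\,(\grad\lambda^2)^{\perp}$, i.e.\ umbilicity, with no need of the $H_{abc}$ analysis; what the total symmetry of $H_{abc}$ (Hessian symmetry plus commutativity) buys is the vanishing $H_{abc}=0$ for $\dim T\geq2$, i.e.\ constancy of the dilation in the $J\mathscr{V}$-directions. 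That is precisely the horizontal homothety statement which the paper postpones to the ``Moreover'' part of Theorem \ref{thm:hcmom_Kahler}, so your computation in effect reproves part of that theorem as well.

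Two caveats. First, your opening claim that $\omega(X_a,X_b)=0$ ``because $T$ is abelian'' is not a formality: abelianness plus $b_1(M)=0$ only yields that $\omega(X_a,X_b)$ is \emph{constant} (for translations of $\mathbb{C}$ with its flat K\"ahler structure the constant is nonzero, and the Hamiltonians are not invariant); killing the constant requires invariance of the moment map. This is exactly the content of the paper's Lemma \ref{lem:isotropic_orbits}, whose own proof has the same gap, so you are in good company --- and, more importantly, the claim is not load-bearing for you: your identities (a), (b), (c) (the derivation of (c) is valid for arbitrary $V$, not only normal $V$) and the reduction of (i) to $f_{ab}=\Lambda h_{ab}$ all hold without isotropy of the orbits, so the equivalences survive. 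Second, in (iii)$\Rightarrow$(i) the conformal class of $(f_{ab})$ is defined only off the fixed-point set, so ``$M$ connected'' should read ``the complement of the fixed-point set is connected''; this holds because that set is a union of totally geodesic submanifolds of codimension at least two, but it deserves a sentence.
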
 
\begin{proof} 
(i)$\Longleftrightarrow$(ii)\;  If $v\in\mathfrak{t}$ we shall denote by $V$ the corresponding (fundamental) vector field on $M$.\\ 
\indent 
Let $h$ be a Euclidean metric on $\mathfrak{t}$ and let $(v_i)$ be a basis of $\mathfrak{t}$\,. Then (i) is satisfied, with respect to $h$\,, if and only if 
(see \cite[Lemma 2.4.4]{BaiWoo2}\,)  there exists a nonnegative function $\Lambda$ on $M$ such that $g(\dif\!\phi_i,\dif\!\phi_j)=\Lambda h_{ij}$\,, 
for any $i$ and $j$, where $h_{ij}=h(v_i,v_j)$ and $\phi_i=v_i\circ\phi$\,.\\ 
\indent 
As $\dif\!\phi_i$ is the one-form which, under the musical isomorphisms, corresponds to $JV_i$\,, we have that (i) holds, with respect to $h$\,, 
if and only if there exists a nonnegative function $\Lambda$ such that $g(V_i,V_j)=\Lambda h_{ij}$\,, for any $i$ and $j$\,; 
in particular, as $T$ is acting by isometries, $\Lambda$ is constant along the orbits. The proof of this equivalence quickly follows.\\  
\indent 
(ii)$\Longleftrightarrow$(iii)\; A foliation on a Riemannian manifold is umbilical if and only if the local 
flows of the basic vector fields, restricted to each leaf, are conformal diffeomorphisms. Therefore this equivalence follows from the fact that 
the metrics induced on the orbits are invariant.
\end{proof} 

\indent 
Here is an application of Proposition \ref{prop:hcmom_Kahler}\,. 

\begin{exm}  \label{exm:2.2} 
Let $T\subseteq{\rm SU}(n+1)$ be an abelian Lie subgroup acting canonically on $\C\!P^n$ (endowed with the Fubini-Study metric). From Proposition \ref{prop:hcmom_Kahler} 
we obtain that the corresponding moment map is horizontally weakly conformal if and only if $\dim T=1$\,.\\ 
\indent 
Indeed, if $\dim T\geq2$ then, by the Gauss' equation, the sectional curvature of a plane tangent to a nontrivial orbit would be nonpositive which, 
together with \cite[(11.2.5)]{BaiWoo2} applied to the Hopf projection from $S^{2n+1}$ to $\C\!P^n$, leads to a contradiction.  
\end{exm} 
 
\indent 
The following fact should be well-known but we do not have a reference for it. 

\begin{lem} \label{lem:isotropic_orbits} 
Let $T$ be an abelian Lie group acting by symplectic diffeomorphisms on a symplectic manifold, with zero first Betti number. 
Denote by $\phi:M\to\mathfrak{t}^*$ the moment map, where $\mathfrak{t}$ is the Lie algebra of $T$ and by $\V$ the generalized 
foliation formed by the orbits.\\ 
\indent 
Then ${\rm ker}\dif\!\phi$ is the orthogonal complement of $\V$ with respect to the symplectic form; in particular, 
the orbits are isotropic. 
\end{lem}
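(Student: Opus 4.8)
The plan is to reduce both statements to the defining property of the moment map. For $v\in\mathfrak{t}$ let $V$ be the fundamental vector field it generates on $M$, and set $\phi^v=v\circ\phi$, a function on $M$. The moment map is characterised by
$$\dif\!\phi^v=\iota_V\omega=\omega(V,\,\cdot\,)\qquad(v\in\mathfrak{t}),$$
where $\omega$ is the symplectic form; it is precisely the vanishing of the first Betti number that provides these primitives, since each $\iota_V\omega$ is closed ($\Lie_V\omega=0$, the action being symplectic) and hence exact.

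For the first assertion I would argue pointwise. Fix $p\in M$; since $\V_p$ is spanned by the vectors $V_p$ ($v\in\mathfrak{t}$), a vector $X\in T_pM$ lies in the $\omega$-orthogonal complement $\V_p^{\perp}$ if and only if $\omega(V_p,X)=0$ for every $v\in\mathfrak{t}$. By the displayed relation this is the same as $\dif\!\phi^v_p(X)=0$ for every $v$, that is, $X\in\ker\dif\!\phi_p$. Thus $\ker\dif\!\phi=\V^{\perp}$ as distributions, which is the main claim.

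The isotropy of the orbits then amounts to the inclusion $\V\subseteq\ker\dif\!\phi$, for, combined with the first part, it yields $\V\subseteq\V^{\perp}$. Concretely one must show that $\omega(V,W)=\dif\!\phi^v(W)$ vanishes for all $v,w\in\mathfrak{t}$, i.e.\ that $\phi$ is constant along the orbits. Here the abelian hypothesis intervenes: from $[V,W]=0$, together with $\Lie_V\omega=0$ and $\iota_W\omega=\dif\!\phi^w$, the identity $\iota_{[V,W]}\omega=\Lie_V(\iota_W\omega)-\iota_W(\Lie_V\omega)$ collapses to $\Lie_V(\dif\!\phi^w)=0$, i.e.\ $\dif\bigl(\omega(V,W)\bigr)=0$, so each function $\omega(V,W)$ is constant on the connected manifold $M$.

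The main obstacle is to pass from ``constant'' to ``zero''. This is exactly the content of the invariance of $\phi$: since $T$ is abelian its coadjoint action is trivial, so equivariance of the moment map is the same as $T$-invariance of $\phi$, which forces $W(\phi^v)=0$ and hence $\omega(V,W)\equiv0$. The pointwise identification $\ker\dif\!\phi=\V^{\perp}$ is entirely formal; all the subtlety is concentrated in the normalisation making $\phi$ the moment map, and it is there that the hypotheses on $T$ and on $M$ are genuinely used.
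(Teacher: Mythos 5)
Your proof is correct and follows essentially the same route as the paper's: the identification $\ker\dif\!\phi=\V^{\perp}$ is derived from the defining relation $\dif\!\phi^v=\iota_V\omega$ exactly as there, and isotropy is reduced to the inclusion $\V\subseteq\ker\dif\!\phi$. The only difference is one of detail: where the paper simply asserts that inclusion (implicitly relying on the $T$-invariance of the moment map), you spell out its justification, via the constancy of $\omega(V,W)$ coming from the abelian hypothesis and the triviality of the coadjoint action, which makes equivariance equivalent to invariance.
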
 
\begin{proof} 
If $v\in\mathfrak{t}$ we shall denote by $V$ the induced vector field on $M$. By definition, the function $x\mapsto\phi(x)(v)$ 
is the Hamiltonian of $V$; we shall denote it by $\phi_v$\,.\\ 
\indent 
We have that $\dif\!\phi(X)=0$\,, for some $X\in TM$, if and only if $\dif\!\phi_v(X)=0$\,, for any $v\in\mathfrak{t}$\,; 
equivalently, $\o(V,X)=0$\,, for any $v\in\mathfrak{t}$\,, where $\o$ is the symplectic form.\\ 
\indent 
The fact that $\V$ is isotropic follows from the fact that $\V\subseteq{\rm ker}\dif\!\phi$\,. 
\end{proof}

\indent 
In the following theorem $T^{\C}\!$ is any complexification of the abelian Lie group $T$. 

\begin{thm} \label{thm:hcmom_Kahler} 
Let $T$ be an abelian Lie group, $\dim T\geq2$, acting by holomorphic isometries on a K\"ahler manifold $(M,g,J)$ with zero first Betti number. 
Denote by $\phi:M\to\mathfrak{t}^*$ the moment map, where $\mathfrak{t}$ is the Lie algebra of $T$.\\ 
\indent 
Then the following assertions are equivalent:\\ 
\indent 
\quad{\rm (i)} $\phi$ is horizontally weakly conformal, with respect to some Euclidean metric on~$\mathfrak{t}$\,;\\ 
\indent 
\quad{\rm (ii)} The action of $T$ extends to a holomorphic local action of $T^{\C\!}$ whose orbits are K\"ahler submanifolds which, 
locally and up to homotheties, can be identified with $T^{\C\!}$ endowed with its canonical complex structure and an invariant K\"ahler metric.\\
\indent 
Moreover, if {\rm (i)} or {\rm (ii)} holds then $\phi$ is horizontally homothetic. 
\end{thm}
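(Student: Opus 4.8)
The plan is to translate both conditions into statements about the induced geometry of the complexified orbits and then to extract the constancy of the conformality factor from the closedness of the K\"ahler form. First I would recall, from the proof of Proposition \ref{prop:hcmom_Kahler}, that (i) holds with respect to a Euclidean metric $h$ on $\mathfrak{t}$ if and only if there is a nonnegative function $\Lambda$, constant along the orbits, with $g(V_i,V_j)=\Lambda\,h_{ij}$, and that $\dif\!\phi_i=(JV_i)^\flat$. By Lemma \ref{lem:isotropic_orbits} the orbits are isotropic, so $\o(V_i,V_j)=0$; since $\o(V_i,V_j)=g(JV_i,V_j)$ this gives $\V\perp J\V$, whence $g(V_i,JV_j)=0$ and, $J$ being an isometry, $g(JV_i,JV_j)=g(V_i,V_j)$. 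The same computation identifies the horizontal distribution, as $\ker\dif\!\phi=(J\V)^\perp$, so that $\H=J\V$.

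Next I would observe that, independently of (i), the complexified action always exists locally. Indeed $T$ abelian gives $[V_i,V_j]=0$; each $V_i$ is holomorphic Killing, so $\Lie_{V_i}J=0$ yields $[V_i,JV_j]=J[V_i,V_j]=0$; and, the Nijenhuis tensor of $J$ vanishing, $[JV_i,JV_j]=0$ as well. Thus $V_i,JV_i$ are $2\dim T$ commuting fields spanning the $J$-invariant distribution $\tD=\V\oplus J\V$, which is therefore integrable with leaves $L$ that are complex, hence K\"ahler, submanifolds; these leaves are exactly the local $T^{\C}$-orbits, so only their metric type remains at issue.

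The crux is the constancy of $\Lambda$ on each leaf $L$ (equivalently, horizontal homotheticity, since $\H=J\V\subseteq TL$). On the regular locus the commuting frame $V_i,JV_i$ furnishes coordinates $(t^i,s^i)$ on $L$ with $V_i=\partial_{t^i}$ and $JV_i=\partial_{s^i}$, in which, by the first paragraph, the restricted K\"ahler form is $\o|_L=\Lambda\,\o_0$ with $\o_0=\sum_{i,j}h_{ij}\,\dif t^i\wedge\dif s^j$ a constant-coefficient, hence closed, nondegenerate form. As $L$ is a complex submanifold, $\o|_L$ is closed, so $\dif\Lambda\wedge\o_0=0$ on $L$. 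Here the hypothesis $\dim T\geq2$ enters decisively: $L$ has real dimension $\geq4$, so wedging with the nondegenerate form $\o_0$ is injective on $1$-forms, forcing $\dif(\Lambda|_L)=0$. This is the step I expect to be the main obstacle, and it is precisely where $\dim T\geq2$ is indispensable (for $\dim T=1$ the leaf is a surface and $\dif\Lambda\wedge\o_0$ vanishes for trivial reasons, so no constancy follows).

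Finally I would close the equivalence. For (i)$\Longrightarrow$(ii), with $\Lambda$ constant on $L$ the induced metric in the coordinates $(t^i,s^i)$ is the constant-coefficient, hence invariant (and in fact flat), K\"ahler metric $\Lambda\big(\sum_{i,j}h_{ij}(\dif t^i\dif t^j+\dif s^i\dif s^j)\big)$, so $L$ is homothetic to $T^{\C}$ equipped with an invariant K\"ahler metric, and the constancy of $\Lambda$ along the horizontal directions is the asserted horizontal homotheticity. For (ii)$\Longrightarrow$(i), a homothetic identification of each leaf with $T^{\C}$ carrying a fixed invariant K\"ahler metric makes $g(V_i,V_j)$ a leafwise-constant multiple of the fixed coefficients $h_{ij}$ (the identically valid $g(V_i,JV_j)=0$ forcing the mixed terms of that invariant metric to vanish); this is exactly the condition $g(V_i,V_j)=\Lambda\,h_{ij}$ characterising (i) via Proposition \ref{prop:hcmom_Kahler}.
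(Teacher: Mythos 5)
Your proof is correct and follows essentially the same route as the paper's: identify the horizontal distribution with $J\V$ via Lemma \ref{lem:isotropic_orbits}, integrate $\V\oplus J\V$ into a local holomorphic $T^{\C\!}$-action, pass to coordinates adapted to the commuting conformal frame $(V_i,JV_i)$, and deduce constancy of $\Lambda$ from the closedness of the K\"ahler form. The only difference is one of detail: you spell out the Lefschetz-type injectivity of wedging with the nondegenerate form (which is exactly where $\dim T\geq2$ enters), a point the paper leaves implicit in ``as $\dif\!\o=0$\,, $\Lambda$ must be constant'', and you treat the direction (ii)$\Longrightarrow$(i) more explicitly than the paper does.
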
 
\begin{proof} 
Denote by $\V$ be the generalized foliation formed by the orbits. {}From Lemma \ref{lem:isotropic_orbits}\,, 
we obtain that $J\V$ is the distribution orthogonal to the fibres of $\phi$\,. Thus, if (ii) holds then $\phi$ is horizontally homothetic.\\ 
\indent 
To prove (i)$\Longrightarrow$(ii)\,, we may restrict to the open subset of $M$ where $\phi$ is a submersion.\\ 
\indent 
As $T$ is abelian and acts by holomorphic diffeomorphisms, $J\V$ and $\V\oplus J\V$ are integrable and $T$ extends to a holomorphic 
local action of $T^{\C}$ whose orbits are the leaves of $\V\oplus J\V$.\\ 
\indent 
To complete the proof, it is sufficient to consider the case $(M,J)=T^{\C\!}$.\\ 
\indent 
Let $(v_1,\ldots,v_n)$ be an orthonormal basis of $\mathfrak{t}$\,. Then $(V_1,JV_1,\ldots,V_n,JV_n)$ is a commuting frame on $T^{\C\!}$ 
which, by Proposition \ref{prop:hcmom_Kahler}\,, is conformal with respect to $g$\,; in particular, in an open neighbourhood of each point of 
$T^{\C\!}$ there exists coordinates $(s_1,t_1,\ldots,s_n,t_n)$ such that $\partial/\partial s_i=V_i$ and $\partial/\partial t_i=JV_i$\,, for any $i=1,\ldots,n$\,. 
Then the K\"ahler form is given by $\o=\Lambda\sum_i^n\dif\!s_i\wedge\dif\!t_i$\,, for some positive function $\Lambda$\,. As $\dif\!\o=0$\,, 
$\Lambda$ must be constant which completes the proof. 
\end{proof}   

\indent 
The following result is an immediate consequence of Theorem \ref{thm:hcmom_Kahler}\,. 

\begin{cor} \label{cor:2.5} 
Let $T$ be an abelian Lie group acting by holomorphic isometries on a K\"ahler manifold $M$ with zero first Betti number, 
$\dim M=2\dim T\geq4$\,. Denote by $\phi:M\to\mathfrak{t}^*$ the moment map, where $\mathfrak{t}$ is the Lie algebra of $T$.\\ 
\indent 
If $\phi$ is horizontally weakly conformal, with respect to some Euclidean metric on~$\mathfrak{t}$\,, then $M$ is flat. 
\end{cor}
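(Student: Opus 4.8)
The plan is to read the conclusion off directly from Theorem~\ref{thm:hcmom_Kahler} by exploiting the dimensional hypothesis. Since $\dim M=2\dim T\geq4$ forces $\dim T\geq2$, the theorem applies, and horizontal weak conformality yields assertion (ii): the $T$-action extends to a holomorphic local action of $T^{\mathbb{C}}$ whose orbits are K\"ahler submanifolds that, locally and up to homotheties, are identified with $T^{\mathbb{C}}$ carrying its canonical complex structure and an invariant K\"ahler metric. The key point is now purely dimensional: $\dim_{\mathbb{R}}T^{\mathbb{C}}=2\dim T=\dim M$, so the orbits of the complexified action are open in $M$. Consequently, on a neighbourhood of each point, $(M,g,J)$ is identified, up to a homothety, with $T^{\mathbb{C}}$ equipped with an invariant K\"ahler metric.

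First I would fix an orthonormal basis $(v_1,\dots,v_n)$ of $\mathfrak{t}$ and pass to the commuting frame $(V_1,JV_1,\dots,V_n,JV_n)$ on this open set. As in the proof of Theorem~\ref{thm:hcmom_Kahler}, there are local coordinates $(s_1,t_1,\dots,s_n,t_n)$ with $\partial/\partial s_i=V_i$ and $\partial/\partial t_i=JV_i$, in which the K\"ahler form is $\o=\Lambda\sum_{i=1}^n\dif\!s_i\wedge\dif\!t_i$ for a positive function $\Lambda$, and closedness of $\o$ forces $\Lambda$ to be constant.

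Next I would pin down the metric itself in this frame. By Proposition~\ref{prop:hcmom_Kahler} the frame is conformal, so $g(V_i,V_j)=\Lambda\,\delta_{ij}$ and, since $J$ is an isometry, $g(JV_i,JV_j)=\Lambda\,\delta_{ij}$. The cross terms vanish because Lemma~\ref{lem:isotropic_orbits} gives the isotropy of the orbits, i.e.\ $g(JV_i,V_j)=\o(V_i,V_j)=0$. Hence in the coordinates $(s_i,t_i)$ the metric reads $g=\Lambda\bigl(\sum_i\dif\!s_i^2+\sum_i\dif\!t_i^2\bigr)$ with $\Lambda$ constant, which is a constant multiple of the Euclidean metric and therefore flat. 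As flatness is a local property and $M$ is connected, $M$ is flat.

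The argument is essentially immediate once Theorem~\ref{thm:hcmom_Kahler} is in hand; the only step requiring care is the dimensional one, ensuring that the $T^{\mathbb{C}}$-orbits are open so that the local model actually covers $M$ rather than a proper submanifold. The subsequent identification of the flat metric is just a matter of combining the conformality of the frame with the isotropy of the orbits and the constancy of $\Lambda$.
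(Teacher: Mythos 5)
Your route is exactly the one the paper intends: the authors state Corollary \ref{cor:2.5} as an ``immediate consequence'' of Theorem \ref{thm:hcmom_Kahler}, and your dimension count plus the explicit identification $g=\Lambda\bigl(\sum_i\dif\!s_i^2+\sum_i\dif\!t_i^2\bigr)$ (conformality of the frame from Proposition \ref{prop:hcmom_Kahler}, vanishing of the cross terms from Lemma \ref{lem:isotropic_orbits}, constancy of $\Lambda$ from $\dif\o=0$ together with $n\geq2$) is the right way to see that the local model is flat.

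There is, however, one overclaim: the assertion that the $T^{\mathbb{C}}$-orbits are open in $M$, hence that the identification holds ``on a neighbourhood of each point''. The equality $\dim_{\mathbb{R}}T^{\mathbb{C}}=\dim M$ gives openness of an orbit only where the fundamental vector fields $V_1,\dots,V_n$ are linearly independent; at a fixed point of the action (which the hypotheses do not exclude --- compare Corollary \ref{cor:hK_fixed_points_hmom_n=1}, where ruling out fixed points uses the harmonicity of the moment map, available only in the hyper-K\"ahler setting) all the $V_i$ and $JV_i$ vanish, the $T^{\mathbb{C}}$-orbit is a single point, and Theorem \ref{thm:hcmom_Kahler}(ii) --- whose proof is in any case carried out only on the open set where $\phi$ is a submersion --- provides no local model there. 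The gap is easy to close: by Proposition \ref{prop:hcmom_Kahler}, (i)$\Rightarrow$(iii), every nontrivial orbit has dimension $n$, so the only bad points form the fixed-point set $F$; this set has empty interior (a Killing field vanishing on an open subset of a connected manifold vanishes identically, and the action is assumed nontrivial), so your argument yields $R\equiv0$ on the dense open set $M\setminus F$, and then $R\equiv0$ on all of $M$ by continuity of the curvature tensor. With this emendation the proof is complete.
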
  

\indent 
In all the results of this section we may, obviously, replace the assumption on the first Betti number with the (less restrictive) 
assumption that the action is Hamiltonian. This, also, applies to the results of the next section.

\section{Harmonic morphisms and moment maps on hyper-K\"ahler manifolds} 

\indent 
Let $(M,g,J_1,J_2,J_3)$ be a hyper-K\"ahler manifold, with zero first Betti number. A K\"ahler structure $J$ on $(M,g)$ is called \emph{admissible} 
(for the given hyper-K\"ahler structure) if there exists $q\in S^2(\subseteq{\rm Im}\Hq$\!) such that $J=q^jJ_j$\,. 

\begin{prop} \label{prop:hK_har_mom} 
Let $T$ be an abelian Lie group acting by triholomorphic isometries on a hyper-K\"ahler manifold $M$ with zero first Betti number.\\ 
\indent 
Then the induced moment map, with respect to (the K\"ahler form of) any admissible K\"ahler structure on $M$, is a harmonic map. 
\end{prop}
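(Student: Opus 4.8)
The plan is to prove harmonicity by reducing to the scalar components of $\phi$ and then exploiting the \emph{full} quaternionic symmetry, which is what distinguishes the triholomorphic situation from the merely holomorphic one. Since the target $\mathfrak{t}^*$ carries a flat (constant) Euclidean metric, in linear coordinates the tension field of $\phi$ has no Christoffel contribution, so $\phi$ is harmonic if and only if each of its components is a harmonic function. Fixing a basis $(v_i)$ of $\mathfrak{t}$ and writing $\phi_i=\phi(\cdot)(v_i)$, it therefore suffices to prove $\D\phi_v=0$ for every $v\in\mathfrak{t}$, where $\phi_v$ is the Hamiltonian of the fundamental field $V$, characterised by $\iota_V\o=\dif\!\phi_v$ with $\o=q^j\o_j$ the Kähler form of the admissible structure $J=q^jJ_j$. (The isometric, Hamiltonian nature of the action, together with the vanishing first Betti number, is what guarantees that these global Hamiltonians exist.)

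The moment-map relation $\dif\!\phi_v=\iota_V\o=g(JV,\cdot)$ shows that $\grad\phi_v=JV$ (the sign, which depends on conventions, is immaterial below), so $\D\phi_v=\Div(JV)$. Because each $J_j$ is parallel on the hyper-Kähler manifold, $\nabla_X(JV)=J(\nabla_X V)$, whence $\Div(JV)=\trace(J\circ A)$, where $A:=\nabla V$ denotes the endomorphism $X\mapsto\nabla_X V$. Triholomorphicity now enters: for each $l$ one has $\Lie_V J_l=0$, and since $\nabla J_l=0$ a short torsion-free computation gives $\Lie_V J_l=[J_l,A]$; hence $A$ commutes with all three complex structures $J_1,J_2,J_3$.

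The argument is then closed by an algebraic observation: if an endomorphism $A$ commutes with a quaternionic triple $(J_1,J_2,J_3)$, then $\trace(J_lA)=0$ for each $l$. Indeed, choosing $m\neq l$ and using that distinct $J_j$ anticommute, one has $J_mJ_lJ_m^{-1}=-J_l$ while $J_mAJ_m^{-1}=A$, so by conjugation-invariance of the trace $\trace(J_lA)=\trace\!\big(J_m(J_lA)J_m^{-1}\big)=\trace(-J_lA)=-\trace(J_lA)$, forcing it to vanish. Consequently $\D\phi_v=\Div(JV)=\trace\!\big((q^jJ_j)A\big)=q^j\trace(J_jA)=0$ for all $v$, and $\phi$ is harmonic. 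I expect the decisive step to be the recognition that it is precisely the commuting of $A=\nabla V$ with the \emph{entire} quaternionic structure — not holomorphicity with respect to a single $J$ — that makes $\trace(JA)$ vanish; a rotation of $\mathbb{C}$, for instance, is a holomorphic Killing field on a flat Kähler manifold whose moment map $\tfrac12|z|^2$ is \emph{not} harmonic, consistent with the restrictive behaviour of the purely Kähler case recorded in Proposition~\ref{prop:hcmom_Kahler} and Example~\ref{exm:2.2}.
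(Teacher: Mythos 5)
Your proof is correct, but it takes a genuinely different route from the paper's. The paper argues via holomorphic symplectic geometry: $\o_2+\i\,\o_3$ is a holomorphic symplectic form on $(M,J_1)$ preserved by $V$, so the two Hamiltonians $f_2,f_3$ assemble into the $J_1$-holomorphic function $f_2+\i f_3$, which is automatically harmonic on a K\"ahler manifold; rotating the frame $(J_1,J_2,J_3)$ puts any admissible structure in the role of $J_2$. You instead compute directly: $\D\phi_v=\Div(JV)=\trace(J\circ\nabla V)$ because $\nabla J=0$; the torsion-free identity $\Lie_V J_l=[J_l,\nabla V]$ converts triholomorphicity into the commutation of $A=\nabla V$ with all three structures; and the conjugation trick $\trace(J_lA)=\trace\bigl(J_m(J_lA)J_m^{-1}\bigr)=\trace(-J_lA)$ forces $\trace(J_lA)=0$, hence $\trace(JA)=q^j\trace(J_jA)=0$ for every admissible $J=q^jJ_j$. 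All your steps check out, including the reduction to components (flat target) and the existence of global Hamiltonians (Killing plus triholomorphic makes $\iota_V\o$ closed, and $b_1=0$ makes it exact); your sanity check is also apt, since for a rotation of $\C$ one has $\nabla V=J$ and $\trace(J\nabla V)=-2\neq0$, matching the non-harmonicity of $\tfrac12|z|^2$. As for what each approach buys: the paper's argument proves more than harmonicity --- the Hamiltonians are real and imaginary parts of holomorphic functions, hence pluriharmonic, and this holomorphicity is exactly the structural fact re-used to establish twistoriality in Proposition \ref{prop:twist_hK_mom} --- whereas your argument is more elementary and pointwise, and it isolates the precise algebraic mechanism (an endomorphism commuting with two anticommuting complex structures has vanishing trace against each of them), which makes transparent why one K\"ahler structure alone is not enough.
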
 
\begin{proof} 
It is sufficient to prove that the Hamiltonian of any triholomorphic Killing vector field, with respect to an admissible K\"ahler structure, 
is a harmonic function.\\ 
\indent 
Let $(g,J_1,J_2,J_3)$ be the hyper-K\"ahler structure of $M$, and let $V$ be a triholomorphic Killing vector field; 
denote by $\o_1$\,, $\o_2$\,, $\o_3$ the K\"ahler forms of $J_1$, $J_2$\,, $J_3$\,, respectively.\\ 
\indent 
Then $\o_2+{\rm i}\,\o_3$ is a holomorphic symplectic form on $(M,J_1)$ which is preserved by $V$ and, in particular, by $V^{1,0}$. 
Consequently, if $f_2$ and $f_3$ are the Hamiltonians of $V$, with respect to $\o_2$ and $\o_3$\,, respectively, then $f_2+{\rm i}f_3$ is 
the (holomorphic) Hamiltonian of $V^{1,0}$ with respect to $\o_2+{\rm i}\,\o_3$\,; in particular, $f_2+{\rm i}f_3$ is a harmonic function 
on~$(M,g)$\,. 
\end{proof}  

\begin{cor} \label{cor:hm_to_R3}
Let $V$ be a triholomorphic Killing vector field on a hyper-K\"ahler manifold $(M,g,J_1,J_2,J_3)$\,, with zero first Betti number.\\ 
\indent 
Then $(f_1,f_2,f_3)$ is a harmonic morphism from $(M,g)$ to $\R^3$, where $f_j$ is the Hamiltonian of $V$ with respect to the K\"ahler form 
of $J_j$\,, $j=1,2,3$\,. 
\end{cor}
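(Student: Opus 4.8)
The plan is to invoke the Fuglede--Ishihara characterisation (see \cite{BaiWoo2}): a smooth map is a harmonic morphism precisely when it is a horizontally weakly conformal harmonic map. Thus I would split the proof into two parts --- harmonicity, which is already essentially contained in Proposition \ref{prop:hK_har_mom}, and horizontal weak conformality, which is the only genuinely new point and which I expect to reduce to a short computation in the quaternion algebra.

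For harmonicity, observe that each $J_j$ is an admissible K\"ahler structure (take the point of $S^2$ to be the corresponding standard imaginary unit of $\Hq$). Hence, by Proposition \ref{prop:hK_har_mom}\,, each Hamiltonian $f_j$ is a harmonic function on $(M,g)$\,. Since the target $\R^3$ carries the flat Euclidean metric, a map into it is harmonic if and only if each of its components is harmonic; therefore $(f_1,f_2,f_3)$ is a harmonic map.

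For horizontal weak conformality, I would first identify the gradients. By definition $\dif\!f_j=\iota_V\o_j$\,, and since $\o_j(X,Y)=g(J_jX,Y)$ this gives $\dif\!f_j(X)=g(J_jV,X)$\,, so that $\grad f_j=J_jV$ (consistent with the identification used in the proof of Proposition \ref{prop:hcmom_Kahler}\,). By \cite[Lemma 2.4.4]{BaiWoo2}\,, with respect to the Euclidean metric on $\R^3$ the map is horizontally weakly conformal if and only if there is a nonnegative function $\Lambda$ with $g(\grad f_i,\grad f_j)=\Lambda\,\delta_{ij}$\,. Now the diagonal entries are $g(J_jV,J_jV)=g(V,V)=|V|^2$\,, because each $J_j$ is an isometry. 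For the off-diagonal entries, using the $g$-skew-symmetry of $J_i$ and the quaternionic relation $J_iJ_j=\pm J_k$ (for $\{i,j,k\}=\{1,2,3\}$) one gets $g(J_iV,J_jV)=-g(V,J_iJ_jV)=\mp g(V,J_kV)$\,; but $g(V,J_kV)=0$ since $J_k$ is skew-symmetric. Hence $g(\grad f_i,\grad f_j)=|V|^2\,\delta_{ij}$\,, which establishes horizontal weak conformality with $\Lambda=|V|^2$\,, and the corollary follows.

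The computation is elementary, so there is no serious obstacle; the only point requiring a little care is the vanishing of the off-diagonal terms, which rests entirely on the compatibility of the three complex structures with $g$ (each $J_k$ being a $g$-isometry, equivalently $g$-skew-symmetric) together with the quaternion multiplication table. I would also note, for completeness, that $\Lambda=|V|^2$ is automatically nonnegative and that the critical set of the map is exactly the zero set of $V$, so that the map is a submersion away from the fixed points of $V$.
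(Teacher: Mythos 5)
Your proof is correct and matches the paper's intended argument: the paper states this corollary without a separate proof, as an immediate consequence of Proposition \ref{prop:hK_har_mom}\,, with precisely the verification you give left to the reader. Your identification $\grad f_j=J_jV$ and the quaternionic computation $g(J_iV,J_jV)=|V|^2\,\delta_{ij}$ (diagonal terms by $g$-compatibility of the $J_j$, off-diagonal terms by skew-symmetry and $J_iJ_j=\pm J_k$) correctly supply the horizontal weak conformality, and harmonicity follows from the proposition exactly as you say.
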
   

\begin{exm} 
The canonical Poisson structure on the dual of a Lie algebra $\mathfrak{g}$ is characterised by the fact that, for any $A\in\mathfrak{g}$\,, 
the corresponding linear function on $\mathfrak{g}^*$ is the Hamiltonian of the vector field determined by the transpose of ${\rm ad}A$\,; 
in particular, the corresponding symplectic leaves are the co-adjoint orbits.\\ 
\indent 
If $\mathfrak{g}$ is the Lie algebra of a compact semisimple Lie group then (see \cite{Biq} and the references therein) 
the complex symplectic structure on any co-adjoint orbit $M$, on the dual of $\mathfrak{g}^{\C}$, 
underlies a $G$-invariant hyper-K\"ahler structure.\\ 
\indent  
Let $J_1$ be the complex structure on $M$, induced from $\mathfrak{g}^{\C}$, and let $J_2$ and $J_3$ be admissible K\"ahler structures  
such that $(J_1,J_2,J_3)$ satisfy the quaternionic identities.\\ 
\indent
Then, any $A\in\mathfrak{g}$\,, defines a complex valued harmonic morphism on $M$, which gives the last two components of the 
harmonic morphism of Corollary \ref{cor:hm_to_R3}\,, determined by the transpose of ${\rm ad}A$\,.\\ 
\indent 
In contrast, the first component of this harmonic morphism (that is, the Hamiltonian with respect to $J_1$ of the vector field on $M$ 
determined by ${\rm ad}A$\,) does not seem to admit such a simple and explicit description (compare \cite[Lemma 1]{D'Amo_S-C}\,). 
Nevertheless, in Theorem \ref{thm:hmom_Calabi}\,, below, we shall give an explicit description for a significant particular case. 
\end{exm} 

\indent 
Let $T$ be an abelian Lie group acting by triholomorphic isometries on a hyper-K\"ahler manifold $M$ with zero first Betti number; 
denote by $\mathfrak{t}$ the Lie algebra of $T$.\\ 
\indent 
Let $\phi:M\to(\mathfrak{t}\otimes{\rm Im}\Hq)^*$ be the map characterised by the fact that, for any 
$v\in\mathfrak{t}$ and unit quaternion $q\in{\rm Im}\Hq$, the function $x\mapsto\phi(x)(v\otimes q)$ is the Hamiltonian 
of the (fundamental) vector field on $M$ corresponding to $v$ with respect to the admissible K\"ahler structure corresponding to $q$\,.\\ 
\indent 
The map $\phi$ is called the \emph{hyper-K\"ahler moment map}. Note that the codomain of $\phi$ is a co-CR quaternionic vector space 
whose twistor space is $n\ol(2)$\,, where $n=\dim T$ and $\ol(2)$ is the holomorphic line bundle, over the Riemann sphere, of Chern number~$2$~\cite{fq_2}.  

\begin{exm} \label{exm:Gibbons-Hawking} 
Let $a>0$\,. Then  $(\R^4,g_a)$ is a (self-dual) nonflat complete hyper-K\"ahler manifold 
(see \cite{Pan-4to3} and the references therein), where 
\begin{equation*}
g_a=(a|x|^2+1)\,g_0-\frac{a(a|x|^2+2)}{a|x|^2+1}(-x_2\dif\!x_1+x_1\dif\!x_2-x_4\dif\!x_3+x_3\dif\!x_4)^2\;, 
\end{equation*} 
with $g_0$ the canonical Euclidean metric on $\R^4$.\\ 
\indent 
Furthermore, $\phi:\bigl(\R^4,g_a\bigr)\to\R^3$\,, 
$\bigl(z_1,z_2\bigr)\mapsto\bigl(|z_1|^2-|z_2|^2, 2z_1\overline{z_2}\bigr)$\,, 
is a harmonic morphism given by the Gibbons-Hawking construction. It follows that $\phi$ is the hyper-K\"ahler moment map 
of the action of $S^1$ on $(\R^4,g_a)$ given by $\zeta\cdot(z_1,z_2)=(\zeta z_1,\zeta z_2)$\,.\\  
\indent 
By taking direct products we easily obtain hyper-K\"ahler moment maps for all possible dimensions of the abelian Lie group 
and the hyper-K\"ahler manifold. Similarly, we obtain harmonic morphisms as in Corollary \ref{cor:hm_to_R3} for all 
possible dimensions of the domain. 
\end{exm}

\begin{prop} \label{prop:twist_hK_mom} 
Let $T$ be an abelian Lie group acting by triholomorphic isometries on a hyper-K\"ahler manifold $M$ with zero first Betti number. 
Denote by $\phi:M\to(\mathfrak{t}\otimes{\rm Im}\Hq)^*$ the hyper-K\"ahler moment map, where $\mathfrak{t}$ is the Lie algebra 
of $T$.\\ 
\indent 
Then $\phi$ is a twistorial map; equivalently, $\phi$ corresponds to a holomorphic map from the twistor space of $M$ to $n\ol(2)$\,,  
mapping the twistor lines onto the images of sections of $n\ol(2)$\,, where $n=\dim T$. 
\end{prop}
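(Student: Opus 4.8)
The plan is to realise $\phi$ explicitly through the twistor correspondence, by assembling the three Hamiltonians attached to each element of $\mathfrak{t}$ into a single holomorphic object on the twistor space of $M$. Recall that the twistor space of $(M,g,J_1,J_2,J_3)$ is $Z=M\times S^2$, endowed with the integrable complex structure which at $(x,q)$ acts as $J_q=q^jJ_j$ on $T_xM$ and as the standard structure on $S^2=\C\!P^1$; the projection $\p:Z\to\C\!P^1$ is holomorphic, its fibres are $(M,J_q)$, and the twistor lines are the holomorphic curves $L_x=\{x\}\times\C\!P^1$. Writing $\zeta$ for the affine coordinate on $\C\!P^1$, $\o_{\pm}=\o_2\pm\i\,\o_3$, and $J_\zeta$ for the corresponding admissible structure, the fibrewise holomorphic symplectic forms assemble into the $\p^*\ol(2)$-valued form
\[
\o(\zeta)=\o_+ + 2\zeta\,\o_1 - \zeta^2\,\o_-,
\]
which is of type $(2,0)$ with respect to $J_\zeta$ (standard hyper-K\"ahler twistor theory; compare \cite{fq_2}\,). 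On the codomain side, a holomorphic map $Z\to n\ol(2)$ over $\C\!P^1$ is the same as an $n$-tuple of holomorphic sections of $\p^*\ol(2)$\,, and the minitwistor correspondence identifies a point $(a_1,a_2,a_3)\in{\rm Im}\Hq$ with the section $\zeta\mapsto(a_2+\i a_3)+2a_1\zeta-(a_2-\i a_3)\zeta^2$ of $\ol(2)$\,.

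For $v\in\mathfrak{t}$ with fundamental field $V$, let $f_1,f_2,f_3$ be the Hamiltonians of $V$ with respect to $\o_1,\o_2,\o_3$; these exist globally because $b_1(M)=0$ (each $\iota_V\o_j$ is closed, since $\dif(\iota_V\o_j)=\Lie_V\o_j=0$ as $V$ is Killing and $J_j$-holomorphic, hence exact). I would set
\[
\mu_v(\zeta)=(f_2+\i f_3)+2\zeta f_1-\zeta^2(f_2-\i f_3),
\]
and, fixing a basis $(v_1,\dots,v_n)$ of $\mathfrak{t}$\,, define $\Phi=(\mu_{v_1},\dots,\mu_{v_n})\colon Z\to n\ol(2)$ over $\C\!P^1$. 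By the minitwistor dictionary above, $\Phi$ is precisely the twistor-space incarnation of the hyper-K\"ahler moment map $\phi$\,, so that, by the equivalent formulation in the statement (cf.\ \cite{LouPan-II}\,), the proposition reduces to showing that $\Phi$ is holomorphic and carries twistor lines onto images of sections.

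The key step is the holomorphicity of each $\mu_v$ on the total space $Z$. From the Hamiltonian relations $\dif\!f_j=\iota_V\o_j$ and linearity one gets, along each fibre, $\dif\!\mu_v(\zeta)=\iota_V\o(\zeta)$; since $V$ is triholomorphic it is $J_\zeta$-holomorphic for every $\zeta$\,, whence $\iota_{V^{0,1}}\o(\zeta)=0$ (a $(0,1)$-vector annihilates the $(2,0)$-form $\o(\zeta)$) and $\iota_V\o(\zeta)=\iota_{V^{1,0}}\o(\zeta)$ is of type $(1,0)$\,. Thus $\mu_v(\zeta)$ is $J_\zeta$-holomorphic on each fibre $M_\zeta$\,, extending to arbitrary $\zeta$ the computation behind Proposition \ref{prop:hK_har_mom}\,, while it is manifestly holomorphic in $\zeta$\,; as $T^{0,1}Z=T^{0,1}_{J_\zeta}M\oplus T^{0,1}\C\!P^1$\,, this yields $\bar\partial\mu_v=0$\,, and the quadratic-in-$\zeta$ form ensures that $\mu_v$ extends across $\zeta=\infty$ as a global section of $\p^*\ol(2)$\,.

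Finally, restricting $\Phi$ to a twistor line $L_x$ gives the $n$-tuple of quadratics $\zeta\mapsto\mu_{v_i}(\zeta)(x)$\,, that is, global sections of $\ol(2)$\,, so $\Phi(L_x)$ is the image of a section of $n\ol(2)$\,; hence $\phi$ is twistorial. I expect the main obstacle to be exactly the passage from fibrewise to total-space holomorphicity: one must reconcile the $J_\zeta$-holomorphicity on fibres with the varying complex structure of $Z$, through the $\ol(2)$-valued form $\o(\zeta)$\,, and verify the behaviour at $\zeta=\infty$ so that $\mu_v$ is a genuine holomorphic section of $\p^*\ol(2)$ rather than merely a function holomorphic on the affine chart.
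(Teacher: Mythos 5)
Your proof is correct, but it takes a genuinely different route from the paper's, even though the core computation is the same: for a triholomorphic Killing field $V$ the three Hamiltonians exist globally (because $\Lie_V\o_j=0$ and $b_1(M)=0$) and combine into functions holomorphic with respect to the admissible complex structures --- your $\mu_v(\zeta)$ at $\zeta=0$ is exactly the holomorphic Hamiltonian $f_2+\i f_3$ of Proposition \ref{prop:hK_har_mom}\,, and your fibrewise statement at general $\zeta$ is the paper's claim for the admissible structure $J_\zeta$\,. The difference is the packaging. The paper never leaves $M$: it invokes the criterion of \cite[Proposition 3.1]{fq}\,, by which twistoriality amounts to $\dif\!\phi$ mapping ${\rm ker}(J+\sqrt{-1})$ into $\mathfrak{t}^{\C}\otimes p^{\perp}$ for each admissible $J$, verifies this by showing that $\psi_J\circ\phi$ is $J$-holomorphic, and then needs a separate argument at the degenerate locus: twistoriality is first obtained on $M\setminus S$, where $S$ is the zero set of $\dif\!\phi$ (the fixed-point set, by Lemma \ref{lem:isotropic_orbits}\,), and then extended by observing that this zero set has empty interior. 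You instead work on $Z=M\times S^2$ and construct the holomorphic map $\Phi:Z\to n\ol(2)$ explicitly, assembling the Hamiltonians into the quadratics $\mu_v(\zeta)$ via the twisted holomorphic symplectic form $\o(\zeta)$\,, thereby proving the second (``equivalently'') formulation of the statement directly. Your route buys globality from the outset --- holomorphicity of $\mu_v$ holds everywhere, fixed points included, so no empty-interior argument is needed --- and it exhibits concretely both the holomorphic map and the section traced out by each twistor line, which also makes transparent the remark in the introduction that $\phi$ pulls back a natural subsheaf of harmonic functions on $(\mathfrak{t}\otimes{\rm Im}\Hq)^*$. The paper's route buys brevity, outsourcing the twistor bookkeeping (integrability of $Z$, the bundle $\ol(2)$\,, behaviour at $\zeta=\infty$) to \cite{fq}\,, with no need for the decomposition $T^{0,1}Z=T^{0,1}_{J_\zeta}M\oplus T^{0,1}\C\!P^1$ on which your $\bar\partial\mu_v=0$ rests. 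Two harmless quibbles: the identity $\iota_{V^{0,1}}\o(\zeta)=0$ is pure type linear algebra (any $(0,1)$-vector annihilates a $(2,0)$-form) and does not require $V$ to be $J_\zeta$-holomorphic --- triholomorphicity is really consumed earlier, in guaranteeing that all three $\iota_V\o_j$ are closed; and the sections arising from twistor lines are the \emph{real} sections of $n\ol(2)$ (those compatible with the real structure), which is how the statement's ``sections'' should be read.
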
 
\begin{proof} 
From Lemma \ref{lem:isotropic_orbits}\,, we obtain that the fixed point set of the action is equal to the zero set $S$ of $\dif\!\phi$\,. 
Hence, $M\setminus S$ is a nonempty open set (as we work only with nontrivial actions).\\ 
\indent 
To prove that $\phi$ is twistorial outside $S$, from \cite[Proposition 3.1]{fq} it follows that it is sufficient to prove the 
following fact, for each admissible K\"ahler structure $J$ on $M$\,: \emph{the differential of $\phi$ maps ${\rm ker}(J+\sqrt{-1})$\,, 
into the (constant) distribution given by $\mathfrak{t}^{\C}\otimes p^{\perp}$, where $p\in({\rm Im}\Hq)^{\C}$ (depends only of $J$ and) 
is isotropic with respect to the complexification of the canonical Euclidean structure of ${\rm Im}\Hq$}.\\ 
\indent 
We claim that this property holds with $p=q_1+\sqrt{-1}\,q_2$\,, where $(q,q_1,q_2)$ is a positive orthonormal frame on ${\rm Im}\Hq$ 
with $q$ the unit quaternion corresponding to $J$. Indeed, let $\psi_J:\mathfrak{t}^*\otimes{\rm Im}\Hq\to\mathfrak{t}\otimes\C$ be 
the linear projection with kernel $\mathfrak{t}^*\otimes\R q$\,, where $\C\subseteq{\rm Im}\Hq$ is generated by $q_1$ and $q_2$\,. 
On identifying ${\rm Im}\Hq=({\rm Im}\Hq)^*$, through its Euclidean structure, we may compose $\phi$ followed by $\psi_J$ 
thus obtaining a holomorphic map from $(M\setminus S,J)$ to $\mathfrak{t}^{\C}$. 
The fact that $\psi_J\circ\phi$ is holomorphic is equivalent to the claim.\\ 
\indent 
Now, obviously, $\psi_J\circ\phi$ extends to a well-defined holomorphic map on $(M,J)$\,; in particular, the zero set of its differential has empty interior. 
Consequently, also, the zero set of the differential of $\phi$ has empty interior which is sufficient to complete the proof. 
\end{proof}  

\begin{cor} \label{cor:hK_fixed_points_hmom_n=1} 
Let $T$ be an abelian Lie group acting by triholomorphic isometries on a hyper-K\"ahler manifold $M$ with zero first Betti number. 
Denote by $\phi:M\to(\mathfrak{t}\otimes{\rm Im}\Hq)^*$ the hyper-K\"ahler moment map, where $\mathfrak{t}$ 
is the Lie algebra of $T$.\\ 
\indent 
If the action has a fixed point then $\phi$ is horizontally weakly conformal, with respect to some Euclidean metric on $\mathfrak{t}$\,, 
if and only if $\dim T=1$\,. 
\end{cor}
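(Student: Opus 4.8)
The plan is to establish both implications of the equivalence under the standing hypothesis that the action has a fixed point $x_0$. The implication $\dim T=1\Rightarrow$ horizontal weak conformality costs nothing: when $\dim T=1$ the codomain $(\mathfrak{t}\otimes{\rm Im}\Hq)^*$ is canonically $\R^3$ and $\phi$ is exactly the triple $(f_1,f_2,f_3)$ of Hamiltonians, which is a harmonic morphism by Corollary \ref{cor:hm_to_R3} and hence horizontally weakly conformal (and the Euclidean metric on the one-dimensional $\mathfrak{t}$ is irrelevant up to scale). Thus the whole content lies in the reverse direction: if $\dim T=n\geq2$ and the action has a fixed point, then $\phi$ is not horizontally weakly conformal for \emph{any} Euclidean metric $h$ on $\mathfrak{t}$.

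I would argue by contradiction, first reducing horizontal weak conformality to a pointwise condition on the fundamental fields, exactly as in the proof of Proposition \ref{prop:hcmom_Kahler}. Fix an $h$-orthonormal basis $(v_1,\dots,v_n)$ of $\mathfrak{t}$, let $V_i$ be the corresponding fundamental vector fields, and let $f_{i,a}$ be the Hamiltonian of $V_i$ with respect to $J_a$ ($a=1,2,3$); then $\grad f_{i,a}=J_aV_i$. By \cite[Lemma 2.4.4]{BaiWoo2}, horizontal weak conformality with respect to $h$ (and the standard Euclidean structure on ${\rm Im}\Hq$) is equivalent to the existence of a nonnegative function $\Lambda$ on $M$ with
\[
g(J_aV_i,\,J_bV_j)=\Lambda\,\delta_{ij}\delta_{ab}\qquad(1\le i,j\le n,\ 1\le a,b\le3).
\]
This identity holds on all of $M$, in particular near $x_0$.

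The decisive step is to extract the leading-order content of this identity at the fixed point. Since $x_0$ is fixed, $V_i(x_0)=0$, and in normal coordinates $V_i=A_ix+O(|x|^2)$ with $A_i:=(\nabla V_i)_{x_0}$. Each $A_i$ is skew-symmetric (as $V_i$ is Killing), commutes with every $J_a$ (as $V_i$ is triholomorphic, $\Lie_{V_i}J_a=0$), and the $A_i$ commute with one another since they generate the linear isotropy representation of the \emph{abelian} group $T$ on $T_{x_0}M$. Using that the $J_a$ are parallel isometries obeying the quaternionic relations, the quadratic-form identity becomes, to leading order, $g\!\left(x,\,J_aJ_b\,A_iA_j\,x\right)=\Lambda(x)\,\delta_{ij}\delta_{ab}$. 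For $a=b$ this reads $-g(x,A_iA_jx)=\Lambda(x)\delta_{ij}$, which (as $A_iA_j$ is symmetric) forces $A_iA_j=0$ for $i\ne j$ and $A_i^2=A_j^2$ for all $i,j$; for $a\ne b$ the operator $J_cA_iA_j$ is skew, so the identity holds automatically and contributes nothing.

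The conclusion is then pure linear algebra, and this is where $n\ge2$ is used. From $A_1A_2=0$ and $A_1^2=A_2^2$ we get $A_2^3=A_1^2A_2=A_1(A_1A_2)=0$; being skew-symmetric, $A_2$ is semisimple, so $A_2^3=0$ forces $A_2=0$, whence $A_1^2=A_2^2=0$ and $A_1=0$, and likewise every $A_i=0$. Thus each $V_i$ is a Killing field vanishing together with its covariant derivative at $x_0$, so $V_i\equiv0$ on the connected manifold $M$; the action is trivial, contradicting the standing nontriviality assumption. I expect the main obstacle to be the bookkeeping in the leading-order reduction — specifically verifying the commutation relations $[A_i,A_j]=0$ and $[A_i,J_a]=0$ and tracking the quaternionic signs so that precisely the conditions $A_iA_j=0$ and $A_i^2=A_j^2$ survive; once these are secured, the nilpotency argument closing the proof is immediate.
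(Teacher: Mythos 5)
Your proof is correct, but it takes a genuinely different route from the paper's. The paper argues globally, by reduction to its earlier results: if $\dim T\geq2$ and $\phi$ is horizontally weakly conformal, then Theorem \ref{thm:hcmom_Kahler} together with Proposition \ref{prop:hK_har_mom} makes $\phi$ a horizontally homothetic harmonic morphism, hence a submersion by \cite[Corollary~4.5.5]{BaiWoo2}; since, by Lemma \ref{lem:isotropic_orbits}, the fixed points of the action are exactly the zeros of $\dif\!\phi$\,, the existence of a fixed point gives the contradiction. You instead argue purely locally at the fixed point, via the isotropy representation: the linearisations $A_i$ of the fundamental Killing fields are skew, commute with each other and with the $J_a$\,, and the quadratic Taylor coefficients of the conformality identity yield $A_iA_j=0$ for $i\neq j$ and $A_i^2=A_j^2$\,, whence your semisimplicity argument ($A_2^3=0$ with $A_2$ skew, so $A_2=0$\,, etc.) forces all $A_i=0$\,; a Killing field with vanishing $1$-jet vanishes identically, contradicting nontriviality of the infinitesimal action. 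The individual steps all check out: $\grad f_{i,a}=J_aV_i$\,, the relation $[A_i,A_j]=0$ for commuting Killing fields with a common zero, $[A_i,J_a]=0$ from triholomorphicity plus $\nabla J_a=0$\,, and the matching of second-order jets in normal coordinates. As for what each approach buys: the paper's proof is short given the preceding machinery and exhibits the result as an instance of the general principle that horizontally homothetic harmonic morphisms have no critical points; yours is elementary and self-contained, needing only \cite[Lemma~2.4.4]{BaiWoo2} and standard Killing-field facts, and, since it uses only the diagonal ($a=b$) part of the conformality relations, it in fact proves the stronger K\"ahler statement that the moment map of an abelian group of holomorphic isometries of dimension $\geq2$ possessing a fixed point is never horizontally weakly conformal, in the spirit of Proposition \ref{prop:hcmom_Kahler} and Example \ref{exm:2.2} (note that the paper's route cannot give this directly, as it invokes harmonicity, which is special to the hyper-K\"ahler setting). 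One cosmetic point: your final contradiction is with triviality of the action of the identity component of $T$, which is the relevant notion here since $\phi$ and all hypotheses are formulated through $\mathfrak{t}$\,; the paper's proof implicitly makes the same reading.
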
 
\begin{proof} 
Suppose, towards a contradiction, that $\dim T\geq2$ and $\phi$ is horizontally weakly conformal, with respect to some Euclidean metric on $\mathfrak{t}$\,. 
Then, by Theorem \ref{thm:hcmom_Kahler} and Proposition \ref{prop:hK_har_mom}\,, $\phi$ is a horizontally homothetic harmonic morphism.  
Consequently, it is a submersion (see \cite[Corollary 4.5.5]{BaiWoo2}\,); that is, the fixed point set is empty, a contradiction. 
\end{proof} 

\indent 
By \cite{isom}\,, any foliation, of ${\rm codim}\neq2$\,, generated by an isometric action of an abelian Lie group is locally defined by harmonic morphisms. 
In the current setting, more can be said: 

\begin{thm} \label{thm:3.7} 
Let $T$ be an abelian Lie group acting by triholomorphic isometries on a hyper-K\"ahler manifold $M$ with zero first Betti number, 
$\dim M=4\dim T\geq8$\,. Let $\phi:M\to(\mathfrak{t}\otimes{\rm Im}\Hq)^*$ be the hyper-K\"ahler moment map, 
where $\mathfrak{t}$ is the Lie algebra of~$T$.\\ 
\indent  
If $\phi$ is horizontally weakly conformal, with respect to some Euclidean metric on~$\mathfrak{t}$\,, then $M$ is flat. 
\end{thm}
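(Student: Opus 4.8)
The plan is to show that, on the open dense subset where $\phi$ is a submersion, the fundamental vector fields $V_1,\dots,V_n$ (corresponding to an $h$-orthonormal basis $v_1,\dots,v_n$ of $\mathfrak{t}$) together with their images $J_\alpha V_i$ ($\alpha=1,2,3$) constitute a parallel orthogonal frame; flatness is then immediate and propagates to all of $M$ by real-analyticity of the hyper-K\"ahler metric. First I would fix the orthogonal splitting. Writing $\phi_{i\alpha}$ for the Hamiltonian of $V_i$ with respect to $\o_\alpha$, we have $\grad\phi_{i\alpha}=J_\alpha V_i$, so Lemma \ref{lem:isotropic_orbits} applied to each $\o_\alpha$ gives $g(V_i,J_\alpha V_j)=-\o_\alpha(V_i,V_j)=0$ and, via the quaternionic identities, $g(J_\alpha V_i,J_\beta V_j)=\pm\,\o_\gamma(V_i,V_j)=0$ for $\alpha\neq\beta$. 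Hence $\V,J_1\V,J_2\V,J_3\V$ are mutually orthogonal, each of dimension $n$, and since $\dim M=4n$ they fill $TM$, with the fibres of $\phi$ equal to $\V$. A short computation then shows that horizontal weak conformality of $\phi$ (for $h$ on $\mathfrak{t}$ and the canonical metric on ${\rm Im}\Hq$) is equivalent to $g(V_i,V_j)=\Lambda\,h_{ij}$ for a nonnegative function $\Lambda$, the cross conditions in the $\alpha$-index being automatic from the isotropy just used.

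Next I would prove that $\Lambda$ is constant. Each component map $\phi_{\cdot\alpha}\colon M\to\mathfrak{t}^*$ is the K\"ahler moment map of $(M,g,J_\alpha)$ and is itself horizontally weakly conformal with the same $\Lambda$; as $\dim T=n\geq2$, Theorem \ref{thm:hcmom_Kahler} forces $\phi_{\cdot\alpha}$ to be horizontally homothetic, so $\Lambda$ is constant along its horizontal distribution $J_\alpha\V$. Since $T$ acts by isometries, $\Lambda$ is also constant along $\V$. These four distributions span $TM$, so $\Lambda$ is a positive constant; after rescaling $g$ (which preserves flatness) we may assume $\Lambda=1$, whence $(V_i,J_\alpha V_i)$ is an orthonormal frame.

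The heart of the argument is then elementary. Put $A_i=\nabla V_i$, which is skew-symmetric because $V_i$ is Killing; abelianness gives $A_i(V_j)=A_j(V_i)=:W_{ij}$. Differentiating the now-constant function $g(V_i,V_j)$ in an arbitrary direction $X$ and using skew-symmetry yields $0=g(A_iX,V_j)+g(A_jX,V_i)=-2g(X,W_{ij})$, so $W_{ij}=0$; that is, the orbits are totally geodesic. Since $V_i$ is triholomorphic and the action is abelian, $[V_i,J_\alpha V_j]=0$, and as $\nabla J_\alpha=0$ we get $\nabla_{V_i}(J_\alpha V_j)=J_\alpha W_{ij}=0$, hence $\nabla_{J_\alpha V_j}V_i=0$. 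Thus $A_i$ vanishes on $\V$ and on each $J_\alpha\V$, i.e. $A_i\equiv0$, so every $V_i$ is parallel; consequently each $J_\alpha V_i$ is parallel as well, since $\nabla(J_\alpha V_i)=J_\alpha\nabla V_i=0$. A parallel orthonormal frame has vanishing curvature, so $(M,g)$ is flat on the submersion locus and therefore, by real-analyticity and connectedness, everywhere.

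I expect the main obstacle to be the global constancy of $\Lambda$: this is the step that both invokes the full strength of Theorem \ref{thm:hcmom_Kahler} (through horizontal homotheticity of the three K\"ahler components) and decisively exploits the quaternionic spanning $\V\oplus J_1\V\oplus J_2\V\oplus J_3\V=TM$, which is precisely where the dimensional hypothesis $\dim M=4\dim T$ is used; without constancy of $\Lambda$ the differentiation trick giving totally geodesic orbits collapses. The only remaining point requiring care is the passage from the regular locus $M\setminus S$ (where $S$ is the fixed-point set identified in the proof of Proposition \ref{prop:twist_hK_mom}) to all of $M$.
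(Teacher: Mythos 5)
Your proof is correct, and up to the constancy of $\Lambda$ it runs parallel to the paper's argument: both use Lemma \ref{lem:isotropic_orbits} to get the orthogonal splitting $TM=\V\oplus J_1\V\oplus J_2\V\oplus J_3\V$ on the regular set (this is exactly where $\dim M=4\dim T$ enters), both identify horizontal weak conformality of $\phi$ with $g(V_i,V_j)=\Lambda h_{ij}$\,, and both feed the three K\"ahler components into Theorem \ref{thm:hcmom_Kahler} and combine the resulting horizontal homothety with $T$-invariance to make $\Lambda$ constant --- in the paper this step appears as ``up to a homothety, $\phi$ is a Riemannian submersion''. The endgame, however, is genuinely different. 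The paper stays inside Riemannian submersion theory: it shows the fibres are flat and totally geodesic (Propositions \ref{prop:hcmom_Kahler} and \ref{prop:hK_har_mom}), observes that each $J_qV$ is the horizontal lift of a constant vector field on the flat base, so that all brackets $[J_pU,J_qV]$ are vertical, and then kills them with the identity $0=\dif\!\o_p(J_pU,J_qV,J_rW)=-g\bigl(U,[J_qV,J_rW]\bigr)$\,; flatness then follows from the local product structure of a Riemannian submersion with flat base, flat geodesic fibres and integrable horizontal distribution. You instead prove that each $V_i$ is parallel --- $\nabla V_i$ vanishes on $\V$ by differentiating the constant $g(V_i,V_j)$\,, and on each $J_\alpha\V$ from $[V_i,J_\alpha V_j]=0$ together with $\nabla J_\alpha=0$ --- so that $\{V_i,J_\alpha V_i\}$ is a parallel frame and $R\equiv0$ pointwise. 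This finish is more elementary (no appeal to the product decomposition of submersions) and in fact strictly stronger: parallel fields commute, so the paper's integrability of the horizontal distribution is an immediate corollary of your conclusion, with no case analysis on the quaternions involved. The remaining difference is the handling of critical points: the paper excludes them outright, since $\phi$ is a horizontally homothetic harmonic morphism and hence submersive by \cite[Corollary 4.5.5]{BaiWoo2}, whereas you work on $M\setminus S$ and propagate flatness afterwards; for that last step, density of $M\setminus S$ (from the proof of Proposition \ref{prop:twist_hK_mom}) plus continuity of the curvature tensor already suffices, so the appeal to real-analyticity of Ricci-flat metrics, while valid, is unnecessary. Two cosmetic points: since the regular set need not be connected, your argument gives only that $\Lambda$ is \emph{locally} constant there, which is all the differentiation trick requires; accordingly, the global rescaling to $\Lambda=1$ should simply be dropped.
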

\begin{proof} 
By the proof of Corollary \ref{cor:hK_fixed_points_hmom_n=1}\,, $\phi$ is a horizontally homothetic submersion. Furthermore, as 
$\dim M=4\dim T$, the orbits are connected components of the fibres of $\phi$\,. Thus, together with Proposition \ref{prop:hcmom_Kahler}\,, 
we obtain that, up to a homothety, $\phi$ is a Riemannian submersion. Moreover, Propositions \ref{prop:hcmom_Kahler} and \ref{prop:hK_har_mom} 
imply that the fibres of $\phi$ are flat and geodesic.\\ 
\indent 
To complete the proof it is sufficient to prove that the horizontal distribution (that is, the orthogonal complement of ${\rm ker}\dif\!\phi$\,) is integrable.\\ 
\indent 
If $v\in\mathfrak{t}$ we shall denote by $V$ the induced vector field on $M$, and if $q$ is an imaginary unit quaternion we shall 
denote by $J_q$ the corresponding admissible K\"ahler structure. Note that, for any such $v$ and $q$\,, the musical dual of $J_qV$ 
is the pullback by $\phi$ of the differential of the linear function on $(\mathfrak{t}\otimes{\rm Im}\Hq)^*$ defined by $v\otimes q$\,. 
Equivalently, on identifying $(\mathfrak{t}\otimes{\rm Im}\Hq)^*=\mathfrak{t}\otimes{\rm Im}\Hq$ through the isomorphism 
induced by the Euclidean metrics, we have that $J_qV$ is the horizontal lift of the (constant) vector field on $\mathfrak{t}\otimes{\rm Im}\Hq$ 
defined by $v\otimes q$\,.\\ 
\indent 
Consequently, for any imaginary unit quaternions $p$ and $q$ and any vector fields $U$ and $V$ induced by elements 
of $\mathfrak{t}$\,, we have that $[J_pU,J_qV]$ is a vertical vector field.\\ 
\indent 
Let $p$\,, $q$ and $r$ be orthonormal imaginary quaternions and let $U$, $V$ and $W$ be vector fields induced by elements of $\mathfrak{t}$\,. 
On denoting by $\o_p$ the K\"ahler form of $J_p$\,, we obtain $0=\dif\!\o_p(J_pU,J_qV,J_rW)=-g\bigl(U,[J_qV,J_rW]\bigr)$\,, 
thus completing the proof. 
\end{proof}

\section{The Calabi metric on the tangent bundle\\ 
of the complex projective space} \label{section:Calabi} 

\indent 
Let $\HM = \bigl{\{}A\in \mathfrak{gl}(n+1,\cn)\, |\, \bar{A}=A^t\bigr{\}}$ be the set of Hermitian $(n+1)\times (n+1)$-matrices. $\HM$ is an $(n+1)^2$-dimensional 
complex vector space and we equip it with the Hermitian metric $(A,B)\mapsto 2\trace (AB) ,$ for $A$ and $B$ in $\HM$.\\
\indent 
Then (see \cite{Ros}\,) $\cp$ can be embedded into $\HM$ as the subset 
$$\bigl{\{} A\in \HM \,|\, A^2=A\,,\,\trace A =1 \bigr{\}}\;.$$ 
Let $\U(n)$ be the unitary group, then $\cp$ is a submanifold of $\HM$ diffeomorphic to $\U(n+1)/\bigl(\U(1)\times\U(n)\bigr)$\,. 
The manifold $\cp$ can actually be seen as the orbit of the adjoint action of $\U(n+1)$ on $\HM$\,, at 
$$A_0 = \begin{pmatrix}
1 &0 & 0\\
0 & 0 &0\\
0 & 0 & 0 
\end{pmatrix} 
\,.$$
\indent 
With respect to the metric on $\HM$\,, the $\U(n+1)$ action is isometric and the induced metric on $\cp$ is isometric with the Fubini--Study metric, with holomorphic sectional curvature equal to $1$\,. Note that, the complex structure $J$ on $\cp$ is given by 
$JX = {\rm i}(I-2A)X$, for all $A\in \cp$ and $X\in T_A \cp$.\\ 
\indent 
Later on, we shall need the following relations which can be checked directly at the point $A_0$ and then extended to the whole of 
$\cp$ by the $\SU$ action (beware that the first one is only valid for $n=1$):
\begin{equation} \label{formules} 
\begin{split} 
XY+YX &= \trace(XY) I, \quad  X,Y \in T_A {\mathbb C}P^1\;;\\
(XY+YX)A &= \trace(XY) A, \quad  X,Y \in T_A {\mathbb C}P^n\;;\\
2(XXY + YXX + 2XYX) &= \trace(XX) Y, \quad  X,Y \in T_A {\mathbb C}P^n, X\perp Y . 
\end{split} 
\end{equation} 
\indent 
Let $\p:\tcp\to \cp$ be the projection. As $T\cp$ is a vector bundle we have a canonical embedding of $\p^*\bigl(\tcp\bigr)$ into 
${\rm ker}\dif\!\p$\,. Accordingly, any $Y\in T_A\cp$\,, for some $A\in\cp$\,, has a \emph{vertical} lift $Y^v$ which is a section 
of $T\bigl(T_A\cp\bigr)\,\bigl(\subseteq T\bigl(\tcp\bigr)\,\bigr)$\,.\\ 
\indent 
Under the embedding of $\cp$ into $\HM$\,, by using the fact that taking vertical lifts is canonical, we deduce that, 
for any $X\in T_A\cp$ we have $Y^v = (0,Y)\in T_{(A,X)}\tcp$.\\ 
\indent 
On the other hand, the Levi--Civita connection of $\cp$ determines a splitting of the exact sequence 
$0\longrightarrow\p^*\bigl(\tcp\bigr)\longrightarrow T\bigl(\tcp\bigr)\longrightarrow\p^*\bigl(\tcp\bigr)\longrightarrow0\,,$  
where the projection from $T\bigl(\tcp\bigr)$ onto $\p^*\bigl(\tcp\bigr)$ is induced by $\dif\!\p$.\\ 
\indent 
Therefore any $Y\in T_A\cp$\,, for some $A\in\cp$\,, also, has a \emph{horizontal} lift $Y^h\in T_{(A,X)}\tcp$\,, for any
$X\in T_A\cp$\,. A straightforward argument shows that $Y^h=\bigl(Y,\,{\rm i}\,[X,JY]\bigr)\in T_{(A,X)}\tcp$.\\ 
\indent 
We can now describe \emph{Calabi's hyper-K\"ahler metric} $G$ on $\tcp$ (compare \cite{BG}\,). At each $(A,X)\in\tcp$ it is 
given by 
\begin{align*}
G(U^h,V^h) &= \tfrac{a+1}{2} g(U,V) + \tfrac{a-1}{2|X|^2}\left[g(U,X)g(V,X) + g(U,JX)g(V,JX)\right]\;;\\
G(U^h,V^v) &= 0\;;\\
G(U^v,V^v) &= \tfrac{2}{a+1} g(U,V) - \tfrac{a-1}{2a(a+1)|X|^2}\left[g(U,X)g(V,X) + g(U,JX)g(V,JX)\right]\;,
\end{align*}
where $a= \sqrt{1+4|X|^2}$, $U,V\in T_A\cp$ and $g$ is the Fubini-Study metric on $\cp$ at the point $A$.\\ 
\indent 
On ${\mathbb C}P^1$ the formulas greatly simplify into
\begin{align*}
G(U^h,V^h) &= a\,g(U,V) ;\\
G(U^h,V^v) &= 0 ;\\
G(U^v,V^v) &= \tfrac{1}{a}\,g(U,V) .
\end{align*}
\indent 
The three complex structures on $\tcp$ are given, at each $(A,X)\in\tcp$\,, as follows:\\ 
\indent
\quad(1) the canonical complex structure $J^*$ on $T^{*}\cp\,\bigl(=\tcp\bigr)$\,:
\begin{align*}
J^* (U^h) &= (JU)^h\;,\\
J^* (U^v) &= - (JU)^v\;;
\end{align*}
\indent
\quad(2) the complex structure $I^*$:
\begin{align*}
I^* (U^h) &= \tfrac{a+1}{2} U^v + \tfrac{a-1}{2|X|^2}\left[g(U,X)X + g(U,JX)JX\right]^v\;,\\
I^* (U^v) &= - \tfrac{2}{a+1}U^h + \tfrac{a-1}{2a(a+1)|X|^2}\left[g(U,X)X + g(U,JX)JX\right]^h\;;
\end{align*} 
\indent 
\quad (3) the product $K^*= I^* \circ J^*$:
\begin{align*}
K^* (U^h) &= \tfrac{a+1}{2} (JU)^v + \tfrac{a-1}{2|X|^2}\left[g(U,X)JX - g(U,JX)X\right]^v\;,\\
K^* (U^v) &=  \tfrac{2}{a+1}(JU)^h - \tfrac{a-1}{2a(a+1)|X|^2}\left[g(U,X)JX - g(U,JX)X\right]^h\;.
\end{align*}
\indent
The Levi--Civita connection $\bar{\nabla}$ of $G$ is given by
\begin{align*}
\bar{\nabla}_{U^h} V^h &= \left(\nabla_{U}V\right)^h - \tfrac{1}{2} \left[ R(U,V)X + R(U,JY)JX\right]^v\;,\\
\bar{\nabla}_{U^v} V^h &= \tfrac{1}{2} \left( \mathcal{A}(U,V)\right)^h\;,\\
\bar{\nabla}_{U^h} V^v &= \left(\nabla_{U}V\right)^v + \tfrac{1}{2} \left( \mathcal{A}(V,U)\right)^h\;,\\
\bar{\nabla}_{U^v} V^v &= \left( \mathcal{B}(U,V)\right)^v\;,
\end{align*}
where $\nabla$ and $R$ are the Levi--Civita connection and the curvature tensor of $\cp$\,, respectively, 
and the tensors $\mathcal{A}$ and $\mathcal{B}$ are defined by
\begin{align*}
\mathcal{A}(U,V) &= \tilde{a} \left[ g(U,X)V - g(U,JX)JV\right]\\
&+ \tilde{a} \left[ g(U,V) - \tilde{a}\left[ g(U,X)g(V,X) + g(U,JX)g(V,JX)\right]\right] X\\
&+ \tilde{a} \left[ g(JU,V) - \tilde{a}\left[ g(U,X)g(V,JX) - g(U,JX)g(V,X)\right]\right] JX
\end{align*}
and 
\begin{align*}
\mathcal{B}(U,V) &= - \tfrac{1}{2}\tilde{a} \left[ g(V,X)U + g(U,X)V + g(V,JX)JU +g(U,JX)JV\right]\\
&+ \tfrac{1}{2}\tilde{a}^2 \left[ g(U,X)g(V,X) - g(U,JX)g(V,JX)\right] X\\
&+ \tfrac{1}{2}\tilde{a}^2 \left[ g(U,X)g(V,JX) + g(U,JX)g(V,X)\right] JX
\end{align*}
where $\tilde{a} = \tfrac{a-1}{a |X|^2}$ and $\mathcal{B}$ is bilinear, symmetric and satisfies
$$ \mathcal{B}(JU,V)=\mathcal{B}(U,JV)= J \mathcal{B}(U,V).$$
\indent 
One can check that $(G,I^*,J^*,K^*)$ is a hyper-K\"ahler structure on $\tcp$. Moreover, the 
canonical lift to $\tcp$ of the action of ${\rm SU}(n+1)$ on $\cp$ is isometric and triholomorphic.

\section{Harmonic morphisms from $\tcp$} 

\indent 
In this section, we prove the following result. 

\begin{thm} \label{thm:hmom_Calabi} 
If $u\in\su$\,, the map $(f_1,f_2,f_3): \tcp \to \rn^3$ is a harmonic morphism, where 
$\tcp$ is endowed with the Calabi metric and 
\begin{equation*} 
\begin{split} 
f_1(A,X)&= g({\rm i}u,JX)\;,\\ 
f_2(A,X)&= a\, g(A,{\rm i}u) - \tfrac{2}{a+1}\,g(X^2,{\rm i}u)\;,\\ 
f_3(A,X)&= g({\rm i}u,X)\;,   
\end{split} 
\end{equation*} 
with $a= \sqrt{1+4|X|^2}$, for any $(A,X)\in\tcp$,  and $g$ the Fubini--Study metric. 
\end{thm}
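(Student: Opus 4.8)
The plan is to realise $(f_1,f_2,f_3)$ as the triple of Hamiltonians of the triholomorphic Killing field induced by $u$ and then to apply Corollary \ref{cor:hm_to_R3}. Let $V$ be the fundamental vector field on $\tcp$ of the one-parameter subgroup of $\SU$ generated by $u$; by the concluding remark of Section \ref{section:Calabi}, the canonically lifted action is isometric and triholomorphic, so $V$ is a triholomorphic Killing field. As the action on $\cp$ is $A\mapsto{\rm e}^{tu}A\,{\rm e}^{-tu}$, with differential $X\mapsto{\rm e}^{tu}X\,{\rm e}^{-tu}$ on the fibres, we have $V_{(A,X)}=\bigl([u,A],[u,X]\bigr)$. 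Using the lift formulas $Y^h=\bigl(Y,{\rm i}\,[X,JY]\bigr)$ and $Y^v=(0,Y)$, this decomposes as $V=[u,A]^h+W^v$, where $W=[u,X]-{\rm i}\,[X,J[u,A]]$.

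By Corollary \ref{cor:hm_to_R3} applied to $V$, it is enough to prove that, up to additive constants and a permutation, $f_1,f_2,f_3$ are the Hamiltonians of $V$ with respect to the K\"ahler forms of $I^*,J^*,K^*$; equivalently, that Hamilton's equation $\dif\!f_j=\iota_V\omega_{\J}$ holds for the appropriate admissible structure $\J\in\{I^*,J^*,K^*\}$, where $\omega_{\J}$ denotes its K\"ahler form. I would verify each such equation by evaluating both sides on an arbitrary horizontal lift $Y^h$ and vertical lift $Y^v$. On the left, the derivatives are read off from the lift formulas — the vertical ones immediately (e.g.\ $Y^v(f_3)=g({\rm i}u,Y)$), the horizontal ones from $Y^h=\bigl(Y,{\rm i}\,[X,JY]\bigr)$ (e.g.\ $Y^h(f_3)=-2\trace\bigl(u\,[X,JY]\bigr)$). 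On the right, $\iota_V\omega_{\J}(\,\cdot\,)=G(\J V,\,\cdot\,)$ is computed from the explicit expressions for $G$ and for $I^*,J^*,K^*$ recorded in Section \ref{section:Calabi}.

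Since all the data are $\SU$-equivariant and the relations \eqref{formules} hold at $A_0$ (and were extended from there by the $\SU$-action), I would reduce the verification to the fibre over $A_0$, where \eqref{formules} apply directly and the residual isotropy $\U(1)\times\U(n)$ may be used to normalise $X$; this keeps the computation to a reasonable size. The matching of the $f_j$ to $I^*,J^*,K^*$ is determined along the way, guided by the fact that $J^*$ preserves the horizontal/vertical splitting whereas $I^*$ and $K^*$ interchange the two subbundles.

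The main obstacle is the equation for $f_2$. While $f_1=g({\rm i}u,JX)$ and $f_3=g({\rm i}u,X)$ are homogeneous of degree one in the fibre variable, and hence have simple differentials, $f_2=a\,g(A,{\rm i}u)-\tfrac{2}{a+1}\,g(X^2,{\rm i}u)$ contains the nonlinear radial factor $a=\sqrt{1+4|X|^2}$ and a quadratic term, and its differential must be matched against precisely the $a$-dependent coefficients that appear in $G(U^v,V^v)$ and in $I^*$. The crux is thus the cancellation that emerges after differentiating $a$ and $\tfrac{2}{a+1}$, simplifying $\trace\bigl((XY+YX)\,{\rm i}u\bigr)$ with the help of \eqref{formules}, and collecting terms; with that identity in hand, the harmonic morphism property follows formally from Corollary \ref{cor:hm_to_R3}.
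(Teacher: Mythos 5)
Your proposal is correct and follows essentially the same route as the paper's proof: realise $(f_1,f_2,f_3)$ as the hyper-K\"ahler moment map of the triholomorphic Killing field $\Gamma=(\gamma(A))^h+(\nabla_X\gamma)^v$ induced by $u$ (your $W$ is exactly $\nabla_X\gamma$), invoke Corollary \ref{cor:hm_to_R3}, and verify the three Hamiltonian identities by direct evaluation against the explicit formulas for $G$, $I^*$, $J^*$, $K^*$, using the relations \eqref{formules}. The paper organises the computation as $\grad f_1=I^*\Gamma$, $\grad f_2=J^*\Gamma$, $\grad f_3=K^*\Gamma$ in the adapted frame $\{X/|X|,\,JX/|X|,\,E_i\}$ at a general point (rather than reducing to the fibre over $A_0$), which confirms your anticipated pairing and in particular carries out the $f_2$ cancellation you flag as the crux.
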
 
\begin{proof} 
Let $\gamma$ be the Killing field on $\cp$, defined by $u\in \su$, and $\Gamma$ the induced triholomorphic Killing field on $\tcp$\,. 
By Corollary \ref{cor:hm_to_R3}\,, it is sufficient to prove that $(f_1,f_2,f_3)$ is the hyper-K\"ahler moment map determined by $\Gamma$\,.\\ 
\indent 
Firstly, note that 
\begin{align*}
\Gamma : \tcp &\to \tcp \\
(A,X) &\mapsto (\gamma(A),\gamma(X))= (\gamma(A))^h + (\nabla_{X}\gamma)^v \in T_{(A,X)}\tcp .
\end{align*}
Then 
\begin{align*}
I^* \Gamma(A,X) &= \tfrac{a+1}{2} (\gamma(A))^v + \tfrac{a-1}{2|X|^2}\left[g(\gamma(A),X)X + g(\gamma(A),JX)JX\right]^v\\
& - \tfrac{2}{a+1} (\nabla_{X}\gamma)^h + \tfrac{a-1}{2a(a+1)|X|^2}g(\nabla_{X}\gamma,JX)(JX)^h\;; \\
J^*\Gamma (A,X) &= (J\gamma(A))^h + (-\nabla_{X}\gamma)^v\;; \\
K^* \Gamma(A,X) &= \tfrac{a+1}{2} (J\gamma(A))^v + \tfrac{a-1}{2|X|^2}\left[g(\gamma(A),X)JX - g(\gamma(A),JX)X\right]^v\\
&+ \tfrac{2}{a+1}(J\nabla_{X}\gamma)^h + \tfrac{a-1}{2a(a+1)|X|^2}g(\nabla_{X}\gamma,JX)X^h\;. 
\end{align*}

Assuming $X\neq 0$, we construct the orthonormal basis $\{ E_1 = \tfrac{X}{|X|}, E_2=\tfrac{JX}{|X|}, E_i\}_{i=3,\dots,n}$ of $T_A\cp$ and take its horizontal and vertical lifts, with norms
\begin{align*}
&|E_1^h|^2= |E_2^h|^2= a, |E_i^h|^2= \tfrac{a+1}{2}, \quad  i \geq 3\,; \\
&|E_1^v|^2= |E_2^v|^2= \tfrac{1}{a}, |E_i^v|^2= \tfrac{2}{a+1}, \quad  i \geq 3\,, 
\end{align*}
and recall the formulas for the derivation of a function on the tangent bundle, at the point $(A,X)$:
\begin{align} \label{eqtan}
Y^v (f\circ \pi)&= Y(f)\circ\pi, \quad Y^v (f\circ\pi)=0 ,\\
Y^h (f(|X|^2)) &=0 , \quad Y^v (f(|X|^2)= 2 f'(|X|^2)g(X,Y) ,\notag\\
Y^h (g(Z,X)\circ\pi) &= g(\nabla_Y Z,X)\circ\pi . \notag
\end{align}
\indent 
\quad1) As $f_1(A,X)= g(\i u,JX)$, we have, since $J\gamma = \proj_{T_A\cp}(\i u)$
\begin{align*}
E_i^h (f_1) &= E_i^h (g(\i u,JX)) = E_{i}^h (g(\gamma,X))  \\
&= g(\nabla_{E_i} \gamma ,X)= -g(\nabla_{X}\gamma, E_i) ,
\end{align*}
so
\begin{align*}
\sum_{i=1}^{n} E_i^h (f_1)\frac{E_i^h}{|E_i^h|^2} &= \tfrac{1}{a} \left( -g\left(\nabla_{X}\gamma, \tfrac{X}{|X|}\right)\tfrac{X^h}{|X|} 
-g\left(\nabla_{X}\gamma, \tfrac{JX}{|X|}\right)\tfrac{(JX)^h}{|X|}\right) \\
&- \sum_{i=3}^{n} \tfrac{2}{a+1} g(\nabla_{X}\gamma, E_i)E_i^h\\
&= \tfrac{2}{a+1} (-\nabla_{X}\gamma)^h + \tfrac{a-1}{a(a+1)|X|^2}\left(  -g(\nabla_{X}\gamma, X)X^h -g(\nabla_{X}\gamma, JX)(JX)^h\right)
\end{align*}
which is exactly the horizontal part of $I^* \Gamma$.\\
For the vertical part,
\begin{align*}
E_i^v (f_1) &= E_i^v (g(\i u,JX)) = E_i^v (g(\gamma,X)) \\
&= (0,E_i) (g(\gamma,X)) = g(\gamma,E_i) 
\end{align*}
and
\begin{align*}
\sum_{i=1}^{n} E_i^v (f_1) &= a \left[ g\left(\gamma,\tfrac{X}{|X|}\right)\tfrac{X^v}{|X|} + g\left(\gamma,\tfrac{JX}{|X|}\right)\tfrac{(JX)^v}{|X|}\right] 
+ \tfrac{a+1}{2} \sum_{i=3}^{n} g(\gamma,E_i) E_i^v \\
&= \tfrac{a+1}{2} \gamma^v + \tfrac{a-1}{2|X|^2}\left[ g(\gamma,X)X^v + g(\gamma,JX)(JX)^v\right],
\end{align*}
which is the vertical part of $I^*\Gamma$.\\ 
\indent 
\quad2) As $f_3(A,X)= g(\i u,X)=g(J\gamma,X)$, we have  
\begin{align*}
E_i^h (f_3) &= E_i^h (g(J\gamma,X)) = g(\nabla_{E_i} J\gamma ,X) = g(\nabla_{X}J\gamma, E_i)\;, 
\end{align*}
so
\begin{align*}
\sum_{i=1}^{n} E_i^h (f_3)\frac{E_i^h}{|E_i^h|^2} &= \tfrac{1}{a} \left[ g\left(\nabla_{X}J\gamma, \tfrac{X}{|X|}\right)\tfrac{X^h}{|X|} 
+g\left(\nabla_{X}J\gamma, \tfrac{JX}{|X|}\right)\tfrac{(JX)^h}{|X|}\right] \\
&+ \sum_{i=3}^{n} \tfrac{2}{a+1} g(\nabla_{X}J\gamma, E_i)E_i^h\\
&= \tfrac{2}{a+1} (\nabla_{X}J\gamma)^h - \tfrac{a-1}{a(a+1)|X|^2}g(\nabla_{X}J\gamma, X)X^h ,
\end{align*}
which is exactly the horizontal part of $K^* \Gamma$.\\
For the vertical part,
\begin{align*}
E_i^v (f_3) &= E_i^v (g(\i u,X)) = E_i^v (g(J\gamma,X)) \\
&= (0,E_i) (g(J\gamma,X)) = g(J\gamma,E_i) ,
\end{align*}
and
\begin{align*}
\sum_{i=1}^{n} E_i^v (f_3) &= a \left[ g\left(J\gamma,\tfrac{X}{|X|}\right)\tfrac{X^v}{|X|} + g\left(J\gamma,\tfrac{JX}{|X|}\right)\tfrac{(JX)^v}{|X|}\right] 
+ \tfrac{a+1}{2} \sum_{i=3}^{n} g(J\gamma,E_i) E_i^v \\
&= \tfrac{a+1}{2} (J\gamma)^v + \tfrac{a-1}{2|X|^2}\left[ g(J\gamma,X)X^v + g(\gamma,X)(JX)^v\right]
\end{align*}
which is the vertical part of $K^*\Gamma$.\\
\indent 
\quad3) Lastly, as $f_2(A,X) = a g(A,\i u) - \tfrac{2}{a+1}g(XX,\i u)$ and $J^* \Gamma(A,X) = (J\gamma(A))^h + (-J\nabla_{X} \gamma(A))^v$, 
we have 
$$\grad^{\tcp} f_2 = \sum_{i=1}^n E_i^h(f_2)\frac{E_i^h}{|E_i^h|^2} + E_i^v(f_2)\frac{E_i^v}{|E_i^v|^2}.$$
Since $X^h = (X, \i [X,JX]) = (X, 2(XAX-AXX))$,
\begin{align*}
E_1^h(f_2) &= \tfrac{1}{|X|} X^h (f_2) = \tfrac{1}{|X|} \Big\{ a X^h(g(A,\i u)) - \tfrac{2}{a+1} X^h(g(XX,\i u))\Big\} \\
&= \tfrac{1}{|X|} \Big\{a g(A,\i u) - \tfrac{2}{a+1}g(XT+TX,\i u)\Big\} ,
\end{align*}
where $T= 2(XAX-AXX)$ and therefore $XT+TX=0$, so
$$E_1^h(f_2) = \tfrac{a}{|X|} g(A,\i u).$$
Since $(JX)^h= (JX,0)$,
\begin{align*}
E_2^h(f_2) &= \tfrac{1}{|X|} (JX)^h (f_2) =\tfrac{1}{|X|} \Big\{ a (JX)^h(g(A,\i u)) - \tfrac{2}{a+1} (JX)^h(g(XX,\i u))\Big\} \\
&= \tfrac{a}{|X|}g(JX,\i u).
\end{align*}
For $i\geq 3$, $(E_i)^h = (E_i , \i [X,JE_i])$ but
$$\i [X,JE_i] = E_i X + X E_i -2 (E_i X + XE_i )A = E_i X + X E_i ,$$
because $(E_i X + XE_i)A= \trace(E_i X)A =0$ (by \eqref{formules}), so $(E_i)^h = (E_i , E_i X + X E_i)= (E_i, T_i)$.
Therefore
\begin{align*}
E_i^h(f_2) &= a E_i^h(g(A,\i u)) - \tfrac{2}{a+1} E_i^h(g(XX,\i u)) \\
&= a g(E_i,\i u) - \tfrac{2}{a+1} g( T_i X + XT_i, \i u)
\end{align*}
but 
$$ T_i X + XT_i = (E_i X + X E_i)X + X(E_i X + X E_i) = E_i XX + XXE_i -2 XE_i X =\Big(\tfrac{\trace X^2}{2}\Big) E_i,$$
by Equation~\eqref{formules}, and 
\begin{align*}
E_i^h(f_2) &= a g(E_i,\i u) - \tfrac{2}{a+1} \Big(\tfrac{\trace X^2}{2}\Big) g( E_i , \i u) \\
&= \Big( a - \tfrac{\trace X^2}{a+1}\Big)g( E_i , \i u). 
\end{align*}

For the vertical parts,
\begin{align*}
E_1^v(f_2) &= \tfrac{1}{|X|} X^v (f_2) = \tfrac{1}{|X|}\Big\{ \tfrac{4}{a}|X|^2 g(A,\i u) + \tfrac{2c}{a(a+1)}|X|^2 g(XX,\i u) - \tfrac{2}{a+1} X^v(g(XX,\i u))\Big\}\\
&= \tfrac{1}{|X|}\Big\{ \tfrac{4}{a}|X|^2 g(A,\i u) - \tfrac{2}{a} g(XX,\i u)\Big\} ,
\end{align*}
since $X^v(g(XX,\i u)) = X(g(XX,\i u)) =2g(XX,\i u)$. However
\begin{align*}
E_2^v(f_2) &= \tfrac{1}{|X|} (JX)^v (f_2) = \tfrac{1}{|X|}\Big\{ - \tfrac{2}{a+1}g(X(JX) + (JX)X,\i u)\Big\}\\
&= 0 ,
\end{align*}
by \eqref{eqtan} and \eqref{formules}.\\
Finally, for similar reasons, if $i\geq 3$
\begin{align*}
E_i^v(f_2) &=  - \tfrac{2}{a+1}g(XE_i + E_iX,\i u),
\end{align*}
hence
\begin{align*}
\grad^{\tcp} f_2 &= \tfrac{1}{|X|^2}\Big\{ g(X,\i u) X^h + g(JX,\i u) (JX)^h\Big\} + \tfrac{2}{a+1}\Big( a - \tfrac{\trace X^2}{a+1}\Big)g( E_i , \i u) E_i^h \\
&+ \Big\{ 4 g(A,\i u) - \tfrac{2}{|X|^2} g(XX,\i u)\Big\} X^v - g(XE_i + E_iX,\i u) E_i^v .
\end{align*}

To establish equality with $J^*\Gamma$, we compute
\begin{align*}
-J\nabla_{X} \gamma &= g(-J\nabla_{X} \gamma,X) \frac{X}{|X|^2} + g(-J\nabla_{X} \gamma,JX) \frac{JX}{|X|^2}+ g(-J\nabla_{X} \gamma,E_i) E_i ,
\end{align*}
and
\begin{align*}
g(-J\nabla_{X} \gamma,E_i) &= g(\nabla_{X} \gamma,JE_i) = g(D_{X} \gamma,JE_i) = g(\gamma(X),JE_i)\\
&=g([u,X],JE_i) = g([X,JE_i],u)
\end{align*}
with
\begin{align*}
[X,JE_i] &= \i [X,[E_i ,A]] = \i \{ XE_i A -XAE_i + AE_i X -E_iAX\}\\
&= \i \{(E_i X + XE_i)A - XAE_i - E_iAX\} = \i \{ 2(E_i X + XE_i)A - (XE_i + E_iX)\}\\
&= -\i (XE_i + E_iX)
\end{align*}
since $(E_i X + XE_i)A = \trace(XE_i)A =0$ by \eqref{formules}. Therefore
$$g(-J\nabla_{X} \gamma,E_i) = g(-(XE_i + E_iX),\i u).$$
On the other hand
$$g(-J\nabla_{X} \gamma ,JX)=-g(\nabla_{X} \gamma ,X)=0,$$
and
\begin{align*}
g(-J\nabla_{X} \gamma ,X)&= g(\nabla_{X} \gamma ,JX)=g( \gamma(X) ,JX)=g([u,X],JX)\\
&= g([X,JX],u)= 2\i g(2XXA-XX,u)=2\i g((\trace XX)A- XX, u)\\
&= 2(\trace XX)g(A,\i u) -2g(XX,\i u) \quad \text{(by Formula~\eqref{formules}).}
\end{align*}
Therefore
\begin{align*}
\Big(-J\nabla_{X} \gamma\Big)^v &= \frac{1}{|X|^2} \Big\{ 2(\trace XX)g(A,\i u) -2g(XX,\i u)\Big\} X^v 
-g(XE_i + E_iX,\i u) E_i^v
\end{align*}
For the horizontal part
\begin{align*}
J\gamma(A) &= \i (I-2A)[u,A] = \i \Big( uA - Au -2A(uA-Au)\Big)\\
&= \i \Big( uA + Au -2 AuA\Big) = (\i u)^{T_A\cp} ,
\end{align*}
so
$$J\gamma(A) = g (\i u,X) \frac{X^h}{|X|^2} + g (\i u,JX) \frac{(JX)^h}{|X|^2} +g (\i u,E_i) E_i^h,$$
which is indeed equal to the horizontal part of $\grad^{\tcp}f_2$ since 
$$\tfrac{2}{a+1}\left(a - \tfrac{\trace XX}{a+1}\right)=1.$$
\end{proof} 

\begin{rem} 
It is known that on a $2n$-dimensional compact K\"ahler-Einstein manifold with positive scalar curvature $s$, a vector field is Killing if and only it is equal to $J$ times the gradient field of an eigenfunction of the Laplacian for the eigenvalue $s/n$\,. While $\tcp$ is not compact this property persists for Killing fields which naturally come from Killing fields on $\cp$. Contrastingly, the Killing field $(A,X)\mapsto (JX)^v$ satisfies this property for $J^*$ but not for~$I^*$.
\end{rem} 

\begin{rem}
On $S^2 = {\mathbb C}P^1$, the expressions for the functions simplify to
\begin{align*}
f_1 (p,e) &= g(b,Je);\\
f_2 (p,e) &= \sqrt{1+|e|^2} g(b,p);\\
f_3 (p,e) &= g(b,e),
\end{align*}
where $(p,e)\in T\sn^2$ and the Killing field $\gamma$ on $\sn^2$ is defined by rotations around $b\in \rn^3$.\\ 
\indent 
This gives a new description for \cite[Example 4.7]{Pan-4to3}\,, in the case when 
the starting harmonic function has exactly two singular points.  
\end{rem}

\end{document}